\documentclass[12pt]{amsart}
\usepackage{accents,amsfonts,amsmath,amssymb,amsthm,comment,hyperref,mathrsfs,mathtools,multirow,times}

\usepackage[top=1.1in,bottom=1.1in,left=1.06in,right=1.06in]{geometry}

\newtheorem{theorem}{Theorem}[section]
\newtheorem{lemma}[theorem]{Lemma}

\theoremstyle{definition}
\newtheorem{definition}[theorem]{Definition}

\theoremstyle{remark}
\numberwithin{equation}{section}

\allowdisplaybreaks

\newcommand\mylabel[1]{\label{#1}}

\newcommand\thm[1]{\ref{thm:#1}}
\newcommand\lem[1]{\ref{lem:#1}}

\newcommand\eqn[1]{(\ref{eq:#1})}

\newcommand\subsect[1]{\ref{subsec:#1}}

\newcommand*{\bigdot}[1]{%
\accentset{\mbox{\large\bfseries .}}{#1}}

\begin{document}

\title[Sum of parts in overpartitions and partitions without repeated odd parts]{Sum of parts in overpartitions and partitions without repeated odd parts}

\author{Frank Garvan}
\address{\newline \newline Department of Mathematics, University of Florida, Gainesville, FL 32608, USA \newline}
\email{fgarvan@ufl.edu}
\author{Rishabh Sarma}
\address{\newline \newline Department of Mathematics, The Pennsylvania State University, McAllister Building, University Park, PA 16802, USA \newline}
\email{rishabh.sarma@psu.edu}
\maketitle

\begin{abstract}
In this paper, we obtain several Ramanujan-type congruences modulo $5$ and $7$ for sum of certain non-overlined parts in overpartitions classified by parity and sum of certain parts in partitions without repeated odd parts classified by parity. Our proofs for the congruences are elementary, depending only on classical theta function identities.
\end{abstract}

\section{Introduction and results}

The partition function $p(n)$, defined as the number of unrestricted partitions of a nonnegative integer $n$, occupies a central place in combinatorics and number theory. One of the most remarkable features of this function is the existence of the celebrated congruences discovered by Ramanujan.
$$p(5n+4)\equiv 0 \pmod{5},$$
$$p(7n+5)\equiv 0 \pmod{7},$$
$$p(11n+6)\equiv 0 \pmod{11}.$$
\\
Over the years, the search for such congruences has expanded far beyond the classical partition function. Authors have since found Ramanujan-type congruences for functions arising from distinct partitions, $t$-core partitions, colored partitions, overpartitions, partition pairs among many others and numerous statistics such as ranks, cranks and smallest parts. 
\\\\
Very recently, Andrews and Dastidar \cite{An-Da26} considered the function $\text{SOME}(n)$, defined as the sum of all the odd parts minus the sum of all the even parts in all the partitions of $n$, found and treated its generating function to arrive at results of the following type, arguably most notably the $5n+4$ congruence satisfied by $\text{SOME}(n)$.

\begin{theorem}
\mylabel{thm:An-Da1}{\cite[Theorem 4]{An-Da26}}
$$\mathrm{SOME}(5n + 4) \equiv 0 \pmod{5}.$$
\end{theorem}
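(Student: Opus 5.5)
The plan is to derive a closed form for the generating function of $\mathrm{SOME}(n)$ in terms of theta functions. After that, the congruence should reduce to a quadratic-residue argument modulo $5$, using Jacobi's identity for $(q;q)_\infty^3$.

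\textbf{Step 1: generating function.} Weighting each part $k$ by $(-1)^{k+1}k$ and differentiating the product $\prod_k (1-zq^k)^{-1}$ in the usual way gives
$$\sum_{n\ge0}\mathrm{SOME}(n)q^n=\frac{1}{(q;q)_\infty}\sum_{k\ge1}\frac{(-1)^{k+1}kq^k}{1-q^k}.$$
We have $-q\frac{d}{dq}\log(q;q)_\infty=\sum_k kq^k/(1-q^k)$ and $-q\frac{d}{dq}\log(q^2;q^2)_\infty=\sum_k 2kq^{2k}/(1-q^{2k})$. Subtracting twice the even-index terms from the full sum therefore shows that the inner sum equals
$$q\frac{d}{dq}\log\psi(q),\qquad \psi(q)=\sum_{n\ge0}q^{n(n+1)/2}=\frac{(q^2;q^2)_\infty^2}{(q;q)_\infty}.$$
Since $\psi/(q;q)_\infty^{-1}$ simplifies, this yields
$$\sum_{n\ge0}\mathrm{SOME}(n)q^n=\frac{q\psi'(q)}{(q^2;q^2)_\infty^2}=\frac{\sum_{n\ge0}T_nq^{T_n}}{(q^2;q^2)_\infty^2},\qquad T_n=\frac{n(n+1)}{2}.$$

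\textbf{Step 2: reduce modulo $5$.} Using $(q^2;q^2)_\infty^5\equiv(q^{10};q^{10})_\infty \pmod 5$, we get $(q^2;q^2)_\infty^{-2}\equiv (q^2;q^2)_\infty^3/(q^{10};q^{10})_\infty$. Jacobi's identity gives $(q^2;q^2)_\infty^3=\sum_{m\ge0}(-1)^m(2m+1)q^{m(m+1)}=\sum_{m\ge 0}(-1)^m(2m+1)q^{2T_m}$. Hence, modulo $5$,
$$\sum_{n\ge0}\mathrm{SOME}(n)q^n\equiv\frac{1}{(q^{10};q^{10})_\infty}\sum_{n,m\ge0}(-1)^m(2m+1)\,T_n\,q^{T_n+2T_m}.$$
The factor $1/(q^{10};q^{10})_\infty$ only shifts exponents by multiples of $5$. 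So it suffices to show that every term of the double sum with $T_n+2T_m\equiv4\pmod 5$ has coefficient divisible by $5$.

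\textbf{Step 3: the key congruence step.} We have $8(T_n+2T_m)+3=(2n+1)^2+2(2m+1)^2$, so $T_n+2T_m\equiv 4 \pmod 5$ is equivalent to $(2n+1)^2+2(2m+1)^2\equiv 35\equiv 0\pmod 5$. The squares modulo $5$ are $\{0,1,4\}$ and the values of $2y^2$ are $\{0,2,3\}$. A sum from these two sets can vanish modulo $5$ only when both summands are $0$, reflecting that $-2$ is a non-residue modulo $5$. Thus $5\mid 2m+1$, which kills the coefficient; incidentally $5\mid T_n$ as well.

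This finishes the proof of Theorem \thm{An-Da1}. The only genuinely delicate point is Step 1, namely recognizing the Lambert series as a logarithmic derivative of $\psi$. After that, Steps 2 and 3 are routine.
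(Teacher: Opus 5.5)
Your proof is correct. Note that the paper itself contains no proof of this statement: Theorem \thm{An-Da1} is quoted from Andrews--Dastidar as motivation, so there is no in-paper argument to match.

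The closest analogues are the paper's mod $5$ arguments for $\mathrm{SNO}$ and $\mathrm{SEDO}$. There the generating function is written as an eta quotient times $E_3(q)B(q)$. The congruence $E_3(q)B(q)\equiv qJ_{25,5}\pmod 5$ is extracted from $\bigdot{E}_3=-3E_3B$ together with the $5$-dissection of $E_3$, and the result is then multiplied by a pentagonal-number series.

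Your Lambert series is $B(q)-4B(q^2)$. Your observation that it equals $q\frac{d}{dq}\log\psi(q)$ is exactly the paper's identity \eqn{DOder}, because $\mathrm{DO}(q)=1/\psi(q)$. The difference is that you differentiate the theta series $\psi$ exactly, rather than differentiating $E_3$ modulo $5$. This gives $\sum_{n\ge0}\mathrm{SOME}(n)q^n=\sum_{n\ge0} T_nq^{T_n}/(q^2;q^2)_\infty^2$ as a genuine identity. The congruence then reduces to a product of two lacunary series whose exponents are governed by the binary form $x^2+2y^2$, so no $J_{25,a}$ bookkeeping is needed.

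The same computation also gives Theorem \thm{An-Da2}. The condition $T_n+2T_m\equiv 2\pmod 5$ forces either $5\mid 2m+1$ or $(2n+1)^2\equiv 1\pmod 5$, and the latter means $5\mid T_n$.

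One correction: your aside in Step 3 that ``$5\mid T_n$ as well'' is false. If $5\mid 2n+1$, then $T_n=((2n+1)^2-1)/8\equiv -8^{-1}\equiv 3\pmod 5$; for example, $n=m=2$ gives exponent $9$ with $T_2=3$. This does not affect the argument, since the factor $2m+1$ alone kills the coefficient, but the remark should be deleted. Also, the garbled phrase ``$\psi/(q;q)_\infty^{-1}$ simplifies'' should read $(q;q)_\infty\psi(q)=(q^2;q^2)_\infty^2$.
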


\begin{theorem}
\mylabel{thm:An-Da2}{\cite[Theorem 5]{An-Da26}}
$$\mathrm{SOME}(5n + 2) \equiv 0 \pmod{5}.$$
\end{theorem}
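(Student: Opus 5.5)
The plan is to find a closed product/theta-series form for the generating function of $\mathrm{SOME}(n)$, reduce it modulo $5$, and then show that every term contributing to $q^{5n+2}$ carries a factor divisible by $5$.

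\textbf{Step 1: the generating function.} Weighting each part $k$ by $(-1)^{k+1}k$ and summing over partitions gives
$$\sum_{n\ge0}\mathrm{SOME}(n)q^n=\frac{1}{(q;q)_\infty}\sum_{k\ge1}\frac{(-1)^{k+1}k\,q^k}{1-q^k}.$$
I will recognise the sum as a logarithmic derivative. The product with exponent $-1$ on $(1-q^k)$ for $k$ odd and $+1$ for $k$ even is
$$\frac{(q^2;q^2)_\infty}{(q;q^2)_\infty}=\frac{(q^2;q^2)_\infty^2}{(q;q)_\infty}=\psi(q)=\sum_{m\ge0}q^{m(m+1)/2}.$$
Hence the sum equals $q\psi'(q)/\psi(q)$. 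Since $(q;q)_\infty\psi(q)=(q^2;q^2)_\infty^2$, this gives
$$\sum_{n\ge0}\mathrm{SOME}(n)q^n=\frac{q\psi'(q)}{(q^2;q^2)_\infty^2}=\frac{1}{(q^2;q^2)_\infty^2}\sum_{m\ge0}T_m\,q^{T_m},\qquad T_m=\frac{m(m+1)}{2}.$$

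\textbf{Step 2: reduction modulo $5$.} Write $1/(q^2;q^2)_\infty^2=(q^2;q^2)_\infty^3/(q^2;q^2)_\infty^5$. Then use $(q^2;q^2)_\infty^5\equiv(q^{10};q^{10})_\infty\pmod 5$ and Jacobi's identity $(q^2;q^2)_\infty^3=\sum_{j\ge0}(-1)^j(2j+1)q^{2T_j}$. This yields
$$\sum_{n\ge0}\mathrm{SOME}(n)q^n\equiv\frac{1}{(q^{10};q^{10})_\infty}\sum_{m,j\ge0}(-1)^jT_m(2j+1)\,q^{T_m+2T_j}\pmod 5.$$
Because $1/(q^{10};q^{10})_\infty$ is a series in $q^5$, it suffices to show that every term with $T_m+2T_j\equiv2\pmod5$ has $5\mid T_m(2j+1)$.

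\textbf{Step 3: residue analysis (the key step).} Multiply by $8$. Using $8T_m=(2m+1)^2-1$, the condition $T_m+2T_j\equiv2$ becomes
$$(2m+1)^2+2(2j+1)^2\equiv19\equiv4\pmod5.$$
The squares modulo $5$ lie in $\{0,1,4\}$. Checking the possibilities for the pair $\bigl((2m+1)^2,(2j+1)^2\bigr)$ modulo $5$ leaves only $(4,0)$ and $(1,4)$:
\begin{itemize}
\item In the case $(4,0)$, we have $5\mid 2j+1$.
\item In the case $(1,4)$, we have $8T_m=(2m+1)^2-1\equiv0$, so $5\mid T_m$.
\end{itemize}
Either way the coefficient vanishes modulo $5$, which proves $\mathrm{SOME}(5n+2)\equiv0\pmod5$.

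I expect the main obstacle to be Step 1, that is, spotting that the weighted divisor sum is exactly $q\,\frac{d}{dq}\log\psi(q)$, which collapses the generating function into derivative-of-theta form. After that, the congruence is a routine quadratic-residue check.
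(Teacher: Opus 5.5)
Your proof is correct and complete. The generating function $\sum_{n\ge0}\mathrm{SOME}(n)q^n=q\psi'(q)/(q^2;q^2)_\infty^2$ is right; it reproduces, e.g., $\mathrm{SOME}(5)=11$ and $\mathrm{SOME}(7)=25$. Replacing $(q^2;q^2)_\infty^5$ by $(q^{10};q^{10})_\infty$ modulo $5$ is legitimate, and $(2m+1)^2+2(2j+1)^2\equiv4\pmod5$ admits only the two residue patterns you list.

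The paper gives no proof of this statement; it is quoted from Andrews and Dastidar. The natural comparison is therefore with the paper's proof of its analogue $\mathrm{SNO}(5n+2)\equiv0\pmod5$.

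In the paper's notation $\mathrm{DO}(q)=1/\psi(q)$. So your Step 1 is the paper's formula $q\frac{d}{dq}\mathrm{DO}(q)=\mathrm{DO}(q)\left(-B(q)+4B(q^2)\right)$ in another guise, i.e.\ $\sum_{n\ge0}\mathrm{SOME}(n)q^n=\left(B(q)-4B(q^2)\right)/(q;q)_\infty$. Both arguments share a skeleton: modulo $5$, an eta quotient becomes a lacunary series times a series in $q^5$, multiplied by the $q$-derivative of a theta series.

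The difference is in how the derivative is handled. The paper first computes an explicit $5$-dissection of it, using the dissection of $E_3(q)$ and the congruence $q\frac{d}{dq}F\equiv\sum_r rq^rF_r(q^p)\pmod p$. This gives $E_3(q)B(q)\equiv qJ_{25,5}\pmod5$, and only then are residues checked against the pentagonal series. You bypass the dissection and run a single two-variable quadratic-residue check on the double sum, as in the classical proof of $p(5n+4)\equiv0\pmod5$ via $q(q;q)_\infty^4$.

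Your route is shorter and needs no product identities. The same computation with $T_m+2T_j\equiv4\pmod5$ forces $5\mid 2j+1$, so it also gives $\mathrm{SOME}(5n+4)\equiv0\pmod5$. The paper's route yields explicit product dissections that are reused across several theorems. It also remains usable in cases such as $\mathrm{SNO}(7n+3)$, where individual terms do not vanish and theta-function identities such as Weierstrass's are genuinely needed.
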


\begin{theorem}
\mylabel{thm:An-Da3}{\cite[Corollary 6]{An-Da26}}
The sum of all the odd parts in the partitions of $5n + 4$ is divisible by $5$.
\end{theorem}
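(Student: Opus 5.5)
Let $\mathrm{SO}(n)$ be the sum of all odd parts and $\mathrm{SE}(n)$ the sum of all even parts over the partitions of $n$. Then $\mathrm{SOME}(n)=\mathrm{SO}(n)-\mathrm{SE}(n)$, and the total sum of parts is $\mathrm{SO}(n)+\mathrm{SE}(n)=\sigma\text{-sum}(n)=\sum_{k\ge 1}\sigma(k)\,p(n-k)$. We will not need the last formula: summing the parts over all partitions of $n$ gives exactly $n\,p(n)$. So I would derive Theorem~\thm{An-Da3} directly from Theorem~\thm{An-Da1} together with Ramanujan's congruence $p(5n+4)\equiv 0 \pmod 5$.

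\textbf{Steps.} First I would record the identity
$$\mathrm{SO}(n)+\mathrm{SE}(n)=n\,p(n).$$
It holds because each partition of $n$ contributes the sum of its parts, which is $n$. Next I would add this identity to $\mathrm{SOME}(n)=\mathrm{SO}(n)-\mathrm{SE}(n)$, giving
$$2\,\mathrm{SO}(n)=\mathrm{SOME}(n)+n\,p(n).$$
Then I would put $n\mapsto 5n+4$. By Theorem~\thm{An-Da1}, $\mathrm{SOME}(5n+4)\equiv 0\pmod 5$. By Ramanujan's congruence, $(5n+4)\,p(5n+4)\equiv 0\pmod 5$. Hence $2\,\mathrm{SO}(5n+4)\equiv 0\pmod 5$. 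Since $2$ is invertible modulo $5$, this gives $\mathrm{SO}(5n+4)\equiv 0 \pmod 5$. As a byproduct, $\mathrm{SE}(5n+4)\equiv 0\pmod 5$ as well.

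\textbf{Main obstacle.} There is essentially none. The only nontrivial input is Theorem~\thm{An-Da1}, which we may assume. The one point needing care is the identity in the first step: the sum of all parts over all partitions of $n$ must be $n\,p(n)$ exactly. This is immediate once we count with multiplicity, so that each copy of a repeated part is added separately, and I would state that convention explicitly in the proof.
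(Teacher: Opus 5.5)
Your argument is correct. Adding $\mathrm{SO}(n)+\mathrm{SE}(n)=n\,p(n)$ to $\mathrm{SOME}(n)=\mathrm{SO}(n)-\mathrm{SE}(n)$, applying Theorem~\thm{An-Da1} and Ramanujan's $p(5n+4)\equiv 0 \pmod 5$, and then inverting $2$ modulo $5$ is the natural deduction that makes this statement a corollary of Theorem~\thm{An-Da1}. The paper itself only quotes the statement from \cite{An-Da26} and gives no proof, but it uses the same total-sum-of-parts idea when it passes from $\mathrm{SEDO}(5n)$ to $\mathrm{SODO}(5n)$ via $n\,p^*(n)$.
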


\begin{theorem}
\mylabel{thm:An-Da4}{\cite[Corollary 7]{An-Da26}}
The sum of all the even parts in the partitions of $5n + 4$ is divisible by $5$.
\end{theorem}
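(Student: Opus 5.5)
The plan is to combine Theorem~\thm{An-Da1} with the classical fact that the sum of all parts over all partitions of $m$ equals $m\,p(m)$. Write $S_o(n)$ and $S_e(n)$ for the sums of all odd parts and of all even parts, respectively, taken over all partitions of $n$. Two relations tie these together:
$$S_o(n)+S_e(n)=n\,p(n), \qquad S_o(n)-S_e(n)=\mathrm{SOME}(n).$$
The first holds because each partition of $n$ contributes exactly $n$ to the total of all its parts. The second is just the definition of $\mathrm{SOME}$.

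Now set $n\mapsto 5n+4$. Ramanujan's congruence $p(5n+4)\equiv 0 \pmod 5$ gives
$$S_o(5n+4)+S_e(5n+4)=(5n+4)\,p(5n+4)\equiv 0 \pmod 5.$$
Theorem~\thm{An-Da1} gives
$$S_o(5n+4)-S_e(5n+4)\equiv 0 \pmod 5.$$
Subtracting the second congruence from the first yields $2S_e(5n+4)\equiv 0 \pmod 5$. Since $2$ is invertible modulo $5$, it follows that $S_e(5n+4)\equiv 0\pmod 5$, which is the statement. Adding the two congruences instead recovers Theorem~\thm{An-Da3} in the same way.

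There is essentially no obstacle here. The only point requiring care is the identity $S_o+S_e=n\,p(n)$, which is immediate. If one preferred a generating-function argument, one could instead note that
$$\sum_n S_e(n)q^n=\frac{1}{(q;q)_\infty}\sum_{k\ge1}\frac{2kq^{2k}}{1-q^{2k}},$$
but the elementary combination above is the route I would take.
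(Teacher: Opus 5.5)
Your argument is correct: since $S_o(n)+S_e(n)=np(n)$, Ramanujan's congruence gives $S_o(5n+4)+S_e(5n+4)\equiv 0\pmod 5$. Subtracting $\mathrm{SOME}(5n+4)\equiv 0$ from Theorem~\thm{An-Da1} leaves $2S_e(5n+4)\equiv 0\pmod 5$, and $2$ is invertible modulo $5$. The paper gives no proof of its own here, since the statement is quoted as Corollary 7 of \cite{An-Da26}; your reasoning is the same device the paper uses in the proof of Theorem~\thm{SODOcong}, where $\mathrm{SODO}(n)+\mathrm{SEDO}(n)=np^*(n)$ turns $\mathrm{SEDO}(5n)\equiv 0$ into $\mathrm{SODO}(5n)\equiv 0 \pmod 5$.
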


In this paper, we study along similar lines for overpartitions and partitions without repeated odd parts to obtain Ramanujan-type congruences via elementary methods. 

\begin{definition}
Let $\text{SENO}(n)$ and $\text{SONO}(n)$ denote the sum of all the even non-overlined parts and the sum of all the odd non-overlined parts in all the overpartitions of $n$ respectively.
\end{definition}

\begin{definition}
Let $\text{SNO}(n)$ denote the sum of all the non-overlined parts in all the overpartitions of $n$.
\end{definition}

Then we have the following. 

\begin{theorem}
\mylabel{thm:SNOcong}
$$\mathrm{SNO}(5n + 4) \equiv 0 \pmod{5},$$
$$\mathrm{SNO}(5n + 2) \equiv 0 \pmod{5},$$
$$\mathrm{SNO}(7n + 3) \equiv 0 \pmod{7}.$$
In other words, the sum of all the non-overlined parts in all the overpartitions of $5n+4$ and $5n+2$ is divisible by $5$ and that in all the overpartitions of $7n+3$ is divisible by $7$.
\end{theorem}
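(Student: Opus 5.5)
Write $f_k=\prod_{m\ge1}(1-q^{km})$. The overpartition generating function is $\overline{P}(q)=\prod_n\frac{1+q^n}{1-q^n}=\frac{f_2}{f_1^2}$. We mark each non-overlined part $n$ by $z^n$, so the factor $(1-q^n)^{-1}$ becomes $(1-z^nq^n)^{-1}$. Applying $\partial/\partial z$ at $z=1$ gives
\[
\sum_{n\ge0}\mathrm{SNO}(n)q^n=\frac{f_2}{f_1^2}\sum_{n\ge1}\frac{nq^n}{1-q^n}=\frac{f_2}{f_1^2}\cdot\frac{1-E_2(q)}{24}.
\]
Here $E_2=1-24\sum\sigma(n)q^n$. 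Equivalently, since $\sum nq^n/(1-q^n)=-q\frac{d}{dq}\log f_1$,
\[
\sum \mathrm{SNO}(n)q^n=-\frac{f_2}{f_1^2}\, q\frac{d}{dq}\log f_1=-\frac{f_2}{f_1^3}\,q\frac{d}{dq}f_1 .
\]
The plan is to manipulate this expression modulo $p$ with the operator $\theta=q\,d/dq$. The key tool: $\theta$ multiplies the coefficient of $q^n$ by $n$, so it annihilates, modulo $p$, every term with exponent $\equiv0\pmod p$.

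\textbf{Mod 5.} Write $-\frac{f_2}{f_1^3}\theta f_1=\frac{f_2 f_1^2}{f_1^5}\cdot(-\theta f_1)$ and use $f_1^5\equiv f_5\pmod 5$. Then
\[
\frac{f_2 f_1^2\,\theta f_1}{f_1^5}\equiv \frac{1}{f_5}\,f_2f_1^2\,\theta f_1=\frac{1}{3f_5}\,f_2\,\theta(f_1^3)\pmod 5 .
\]
Since $1/f_5$ is a series in $q^5$, we must show that the coefficients of $q^{5n+4}$ and $q^{5n+2}$ in $f_2\,\theta(f_1^3)$ vanish mod 5.

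Jacobi's identity $f_1^3=\sum_{k\ge0}(-1)^k(2k+1)q^{k(k+1)/2}$ gives $\theta(f_1^3)=\sum(-1)^k(2k+1)\tfrac{k(k+1)}2 q^{k(k+1)/2}$. Euler's theorem gives $f_2=\sum_j(-1)^jq^{j(3j-1)}$. The exponent of a product term is
\[
N=\tfrac{k(k+1)}2+j(3j-1).
\]
Then $8N+3=(2k+1)^2+2(6j-1)^2$, using $24j^2-8j=2(6j-1)^2-2$.

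Squares mod 5 lie in $\{0,1,4\}$. For $N\equiv4$ we need $(2k+1)^2+2(6j-1)^2\equiv0\pmod5$, and for $N\equiv2$ we need it $\equiv4$. The values $x+2y$ with $x,y\in\{0,1,4\}$ are $0,2,8\equiv3,1,3,9\equiv4,4,6\equiv1,12\equiv2$.

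The value $0$ forces $x=y=0$, so $5\mid 2k+1$ and the factor $(2k+1)$ kills the term. The value $4$ arises from $(x,y)=(0,2)$, which is impossible since $y$ must be a square residue; from $(x,y)=(4,0)$; or from $(x,y)=(0,2)$ again, which is excluded. So $4$ arises only from $(2k+1)^2\equiv4$ with $5\mid 6j-1$. Then $2k+1\equiv\pm2$, so $k\equiv2$ or $k\equiv 0\pmod5$ up to sign, giving $k(k+1)/2\equiv3$ or $0$.

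The $k\equiv0$ case dies through the factor $k(k+1)/2$, but the $k\equiv2$ case needs more care. I expect this residue bookkeeping to be the main obstacle. If the raw form fails, I will instead split $\theta(f_1^3)$ differently, for instance writing $\frac{f_2}{f_1^2}=\frac{f_2f_1^3}{f_1^5}$ so that the generating function becomes $\frac{1}{f_5}f_2f_1^3\cdot(-\theta\log f_1)$. I will then use $\theta$ acting on $f_2f_1^3$ versus $f_2$ separately, via the Leibniz rule $f_2\,\theta f_1^3=\theta(f_2f_1^3)-f_1^3\theta f_2$. Each piece $\theta(\cdot)$ vanishes on exponents $\equiv0$, and the terms are dealt with by the same quadratic-form analysis. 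The choice of splitting is the delicate step, and I will test it numerically on small coefficients before committing.

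\textbf{Mod 7.} Use $f_1^7\equiv f_7\pmod 7$ and write
\[
\frac{f_2}{f_1^3}\theta f_1=\frac{f_2f_1^3\,\theta f_1}{f_1^7}\equiv\frac{1}{4f_7}\,f_2\,\theta(f_1^4)\pmod 7,
\]
or, if convenient, the form with $f_1^6 \theta f_1 $. I will expand $f_1^3$ by Jacobi and $f_1$ and $f_2$ by Euler. The exponent $N\equiv3\pmod 7$ is encoded by a quadratic form such as $24N+c$ equal to a sum of squares. Checking quadratic residues mod 7 then forces some linear factor, $(2k+1)$ or $N$ itself through $\theta$, to be divisible by 7.
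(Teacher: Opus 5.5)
\paragraph{The modulus $5$ case.}
Your reduction $\sum\mathrm{SNO}(n)q^n\equiv-\frac{1}{3f_5}\,f_2\,\theta(f_1^3)\pmod 5$, where your $\theta=q\frac{d}{dq}$, is the same as the paper's. The paper writes it as $f_2E_3(q)B(q)/f_5$ with $E_3B=-\frac13 q\frac{d}{dq}E_3$. However, the execution breaks down and leaves the mod $5$ congruences unproved.

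\begin{itemize}
\item The identity $8N+3=(2k+1)^2+2(6j-1)^2$ is false. The correct one is $24N+5=3(2k+1)^2+2(6j-1)^2$.
\item Your table of values of $x+2y$ omits $(x,y)=(1,4)$ and invokes the impossible pair $(0,2)$.
\item $2k+1\equiv\pm2\pmod 5$ gives $k\equiv 1,3$, not $k\equiv 0,2$.
\end{itemize}

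Your ``hard case $k\equiv2$'' is an artifact of these slips.

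No combined quadratic form is needed. The coefficient $(-1)^k(2k+1)\frac{k(k+1)}{2}$ of $\theta(f_1^3)$ is $\equiv0\pmod 5$ unless $k\equiv1,3\pmod5$, and in those cases $\frac{k(k+1)}{2}\equiv1$. So $\theta(f_1^3)$ is supported mod $5$ on exponents $\equiv1$; this is the paper's $q\frac{d}{dq}E_3\equiv 2qJ_{25,5}$. Since $j(3j-1)\equiv0,2,4\pmod5$, the product $f_2\,\theta(f_1^3)$ has no exponents $\equiv 2,4\pmod 5$.

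\paragraph{The modulus $7$ case.}
This part has a genuine gap.

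First, there is an algebra slip. The correct reduction is
$\frac{f_2}{f_1^3}\theta f_1=\frac{f_2f_1^4\,\theta f_1}{f_1^7}\equiv\frac{1}{5f_7}f_2\,\theta(f_1^5)$,
not $\frac{1}{4f_7}f_2\,\theta(f_1^4)$.

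More seriously, no residue check can succeed once $f_1$, $f_1^3$ and $f_2$ are expanded separately, because individual terms do not vanish. Consider $f_2\,\theta(f_1^5)=\frac53 f_2f_1^2\theta(f_1^3)$, which up to a unit is the paper's $E(q)^2E(q^2)E_3B$. Every term with $(i_1,i_2,k,j)\equiv(0,0,1,1)\pmod 7$ has exponent $\equiv 0+0+1+2=3$ and coefficient $\equiv\pm3$. Concretely, the $q^3$ coefficient is $15+3-3+3+3=21$, which is divisible by $7$ only after summing.

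So the congruence requires cancellation between different terms. That is why the paper $7$-dissects $E(q)$ and $E_3B$ and then needs Weierstrass' identity to prove $C(q)\equiv0$.

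If you want a termwise proof, you must first collapse to a two-variable sum whose coefficient carries the vanishing. One way uses \eqn{etaprodsum1}: write $f_1^5=f_2^2A$ with $A=\sum_n(6n+1)q^{n(3n+1)/2}$. Then
\[
f_2\,\theta(f_1^5)=f_2^3\,\theta A+\tfrac23 A\,\theta(f_2^3),
\]
whose terms are $(-1)^k(2k+1)(6n+1)\frac{(6n+1)^2+4(2k+1)^2-5}{24}\,q^N$, with $24N+7=6(2k+1)^2+(6n+1)^2$. All denominators here are prime to $7$.

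For $N\equiv3\pmod 7$ this forces $(6n+1)^2\equiv(2k+1)^2+2$, hence $(2k+1)^2\equiv0$ or $2\pmod 7$.
\begin{itemize}
\item If $(2k+1)^2\equiv 0$, the factor $2k+1$ kills the term.
\item If $(2k+1)^2\equiv 2$, the numerator is $\equiv 5\cdot2-3\equiv0$.
\end{itemize}

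Some such reorganization, or the paper's theta-function identity, is the missing idea. As stated, your mod $7$ plan does not go through.
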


\begin{theorem}
\mylabel{thm:SENOcong}
$$\mathrm{SENO}(7n + 5) \equiv 0 \pmod{7}.$$
In other words, the sum of all the even non-overlined parts in all the overpartitions of $7n+5$ is divisible by $7$.
\end{theorem}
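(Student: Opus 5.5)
The plan is to find the generating function of $\mathrm{SENO}(n)$ in closed form, reduce it modulo $7$ to a product of classical theta series, and then analyse the residue of the exponent modulo $7$. Write $E_k=(q^k;q^k)_\infty$ and $\theta=q\,\frac{d}{dq}$. The overpartition generating function is $\overline{P}(q)=(-q;q)_\infty/(q;q)_\infty=E_2/E_1^2$. Marking each even non-overlined part $2k$ by its size gives
$$\sum_{n\ge0}\mathrm{SENO}(n)q^n=\overline{P}(q)\sum_{k\ge1}\frac{2k\,q^{2k}}{1-q^{2k}}.$$
Since $\sum_{k\ge1}\frac{2kq^{2k}}{1-q^{2k}}=-\theta\log E_2=-\theta E_2/E_2$, this becomes
$$\sum_{n\ge0}\mathrm{SENO}(n)q^n=-\frac{\theta E_2}{E_1^{2}}.$$

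Next I reduce modulo $7$. Since $E_1^7\equiv E_7\pmod 7$, we have $E_1^{-2}\equiv E_1^{5}/E_7$. The factor $1/E_7$ is a series in $q^7$, so it does not affect residue classes of exponents. It therefore suffices to show that the coefficient of $q^{7n+5}$ in $E_1^5\,\theta E_2$ vanishes modulo $7$. I would split $E_1^5=E_1^3\cdot E_1\cdot E_1$ and use two classical identities:

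- Jacobi's identity $E_1^3=\sum_{a\ge0}(-1)^a(2a+1)q^{a(a+1)/2}$;
- Euler's pentagonal number theorem for each remaining $E_1$, and for $E_2$, which gives $\theta E_2=\sum_{d\in\mathbb Z}(-1)^d d(3d-1)q^{d(3d-1)}$.

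This writes $E_1^5\theta E_2$ as a fourfold sum over $a,b,c,d$. To complete squares, put $X=2a+1$, $Y=6b-1$, $Z=6c-1$, $W=6d-1$. Multiplying the exponent $m$ by $24$ gives
$$24m=3X^2+Y^2+Z^2+2W^2-7.$$
So $m\equiv5\pmod 7$ exactly when $3X^2+Y^2+Z^2+2W^2\equiv1\pmod 7$. The weight attached to each term is $\pm X\cdot d(3d-1)$, and $d(3d-1)=(W^2-1)/12$. Hence it is enough to show that
$$\sum (-1)^{a+b+c+d}\,X\,(W^2-1)\equiv 0 \pmod 7,$$
summed over all quadruples with a fixed exponent $m\equiv5\pmod7$.

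The case analysis goes by the squares modulo $7$, which lie in $\{0,1,2,4\}$. For each admissible tuple of residues $(X^2,Y^2,Z^2,W^2)$, I check one of three things:

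- $7\mid X$, so the weight is $0$ modulo $7$;
- $W^2\equiv1\pmod 7$, so $W^2-1\equiv0$;
- the remaining terms cancel in pairs. For this I would use the involution swapping $b\leftrightarrow c$, together with the usual involutions on $Y$ and $Z$ that fix $Y^2$ or $Z^2$ while flipping the signs $(-1)^b$ or $(-1)^c$, in the manner of the classical proofs of $p(7n+5)\equiv0\pmod 7$.

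I expect this last step to be the main obstacle: proving that every residue pattern not killed by $X$ or $W^2-1$ genuinely cancels. If the four-variable bookkeeping becomes unmanageable, I would fall back on a different grouping. Using $E_1^2/E_2=\varphi(-q)=\sum(-1)^kq^{k^2}$, one can write $E_1^5\theta E_2=E_1^3\,\varphi(-q)\,E_2\,\theta E_2=\tfrac12E_1^3\,\varphi(-q)\,\theta(E_2^2)$. The useful point here is that the factor $\theta$ multiplies the coefficient of $q^m$ by $m$, and this is the kind of weight that vanishes on suitable residue classes modulo $7$. With that factorisation, the analysis is on a simpler ternary-type form.
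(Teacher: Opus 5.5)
Your generating function $-\theta E_2/E_1^2$ (with your $\theta=q\,d/dq$) is correct. So are the reduction to $[q^{7n+5}]\,E_1^5\,\theta E_2\equiv 0\pmod 7$, the identity $24m=3X^2+Y^2+Z^2+2W^2-7$, and the weight $\pm X(W^2-1)/12$.

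The gap is the cancellation step, which fails. The involutions you invoke either do not exist or do not reverse signs. The exponent $b(3b-1)/2$ determines $b$ uniquely: each $|Y|$ coprime to $6$ comes from exactly one $b$, and $(-1)^b$ is a function of $|Y|$. So nothing fixes $Y^2$ while flipping $(-1)^b$. The swap $b\leftrightarrow c$ preserves both $(-1)^{b+c}$ and the weight, so it pairs \emph{equal} terms. Concretely, $(a,b,c,d)=(0,0,1,-1)$ has $m=5$, residue pattern $(X^2,Y^2,Z^2,W^2)\equiv(1,1,4,0)$ and weight $4$; its swap partner $(0,1,0,-1)$ also has weight $4$.

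Worse, at $m=5$ only $d=1$ and $d=-1$ contribute. Their block totals are $-2\cdot[q^3]E_1^5=-20$ and $-4\cdot[q^1]E_1^5=20$. Neither is divisible by $7$, and they lie in different residue patterns ($W^2\equiv 4$ versus $W^2\equiv 0$). So no pairing that fixes $d$, and no pattern-by-pattern count, can produce the result; the cancellation reflects a genuine identity among the factors.

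Your fallback does not help either. $E_2^2$ is itself a double sum, so the form $3X^2+24k^2+2W_1^2+2W_2^2$ is still quaternary, not ternary. Termwise vanishing fails again, e.g.\ at $(a,k,c,d)=(0,0,2,1)$, $m=12$.

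The missing idea is to group the factors into exactly two theta series, so that the congruence condition on the exponent itself kills every weight. This is how the classical $p(7n+5)$ proof actually works: there $X_1^2+X_2^2\equiv 0\pmod 7$ forces $7\mid X_1$ and $7\mid X_2$, and no involutions are involved.

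Write $E_1^5\,\theta E_2=\tfrac13\,(E_1^5/E_2^2)\,\theta(E_2^3)$. Then use the quintuple-product consequence $E_1^5/E_2^2=\sum_n(6n+1)q^{(3n^2+n)/2}$ together with Jacobi's identity for $E_2^3$. With $V=6n+1$ and $X=2a+1$ one gets $V^2+6X^2=24m+7$ and weight $(-1)^aVX(X^2-1)/4$. For $m\equiv 5\pmod 7$ the condition becomes $V^2-X^2\equiv 1$. Its only solutions among the squares $\{0,1,2,4\}$ have $X^2\equiv 0$ or $1$, so every term vanishes mod $7$.

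This is precisely the paper's proof. There it is phrased as $E_3(q)B(q)\equiv qJ_{49,14}+2q^3J_{49,7}\pmod 7$, so $E_3(q^2)B(q^2)$ lives on exponents $\equiv 2,6$. This is combined with the fact that $(3n^2+n)/2$ is never $\equiv 3$ or $6\pmod 7$.
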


\begin{theorem}
\mylabel{thm:SONOcong}
$$\mathrm{SONO}(5n + 3) \equiv 0 \pmod{5}.$$
In other words, the sum of all the odd non-overlined parts in all the overpartitions of $5n+3$ is divisible by $5$.
\end{theorem}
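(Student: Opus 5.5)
The plan is to find a product-times-Lambert-series generating function for $\mathrm{SONO}(n)$, reduce it modulo $5$ to an expression whose $5$-dissection can be read off from classical theta function identities, and then check that the $q^{5n+3}$ component vanishes.

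\emph{Generating function.} Give weight $z$ to each non-overlined odd part. The overpartition generating function becomes $(-q;q)_\infty\prod_{k\ \mathrm{odd}}(1-zq^k)^{-1}\prod_{k\ \mathrm{even}}(1-q^k)^{-1}$. Differentiating in $z$ and setting $z=1$ gives
\begin{equation*}
\sum_{n\ge0}\mathrm{SONO}(n)q^n=\overline{P}(q)\,S(q),\qquad S(q)=\sum_{k\ \mathrm{odd}}\frac{kq^k}{1-q^k},
\end{equation*}
where $\overline{P}(q)=(-q;q)_\infty/(q;q)_\infty=1/\varphi(-q)$ and $\varphi(-q)=(q;q)_\infty^2/(q^2;q^2)_\infty$.

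Since $(-q;q)_\infty=1/(q;q^2)_\infty$, we have $S=qD\log(-q;q)_\infty$ with $D=d/dq$. Hence
\begin{equation*}
\overline{P}S=\frac{qD(-q;q)_\infty}{(q;q)_\infty}.
\end{equation*}
Equivalently, $S=(2E_2(q^2)-E_2(q)-1)/24$, where $E_2$ is the weight-$2$ Eisenstein series.

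\emph{Reduction mod $5$.} Use $\varphi(-q)^5\equiv\varphi(-q^5)\pmod 5$ to write
\begin{equation*}
\sum\mathrm{SONO}(n)q^n\equiv\frac{\varphi(-q)^4\,S(q)}{\varphi(-q^5)}\pmod 5.
\end{equation*}
The denominator is a series in $q^5$. It therefore suffices to show that the coefficients of $q^{5n+3}$ in $\varphi(-q)^4S(q)$ vanish mod $5$.

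\emph{Main step: the $5$-dissection.} This is the step I expect to be the main obstacle. The first thing to try is the classical $5$-dissection of $\varphi(-q)$,
\begin{equation*}
\varphi(-q)=\varphi(-q^{25})-2qX(q^5)+2q^4Y(q^5),
\end{equation*}
with $X,Y$ explicit theta quotients. Raising this to the fourth power gives the $5$-dissection of $\varphi(-q)^4$ mod $5$. For $S$, I would seek an analogous dissection: split $\sum_{k\ \mathrm{odd}}kq^k/(1-q^k)$ by $k\bmod 5$. The terms with $5\mid k$ vanish mod $5$, and the remaining classes can be expressed through derivatives of $\log$ of the theta products $X(q^5)$ and $Y(q^5)$.

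A cleaner alternative is to express $\varphi(-q)^4S$ as a derivative. Note that $qD\varphi(-q)=-\varphi(-q)\,T(q)$, where $T$ is the weighted Lambert series attached to $\overline{P}$. One can also write $S=\tfrac12\bigl(T-\sum_{k}\tfrac{2kq^{2k}}{1-q^{2k}}\bigr)$-type combinations. Either identity lets $\varphi(-q)^4S$ be rewritten as $\tfrac15\,qD\bigl(\varphi(-q)^5\bigr)$ plus a term with an explicit $5$-dissection. The coefficient of $q^m$ in $qD(\cdot)$ carries a factor $m$, which kills the residue class $m\equiv0$ but does not by itself handle $m\equiv3$. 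So the real check is that, after multiplying out the dissections, the pieces landing in the class $q^{5n+3}$ cancel mod $5$.

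\emph{Fallback.} If the elementary bookkeeping becomes unwieldy, I would verify the identity by modular forms. Using $E_2\equiv E_6\pmod 5$, the expression $\varphi(-q)^4\bigl(2E_6(q^2)-E_6(q)-1\bigr)$ becomes a mod-$5$ modular form on $\Gamma_0(4)$, after equalizing weights using $E_4\equiv1\pmod5$. Applying the operator that extracts the exponents $\equiv3\pmod5$ (i.e.\ twisting and applying $U_5$) gives a form on $\Gamma_0(100)$. A Sturm-bound computation of finitely many coefficients then confirms that this form vanishes mod $5$.
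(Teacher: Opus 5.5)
Your setup is correct: $\sum\mathrm{SONO}(n)q^n=S(q)/\varphi(-q)$, where $S=\sum_{k\ \mathrm{odd}}kq^k/(1-q^k)$ is the paper's $B(q)-2B(q^2)$. The reduction to the $q^{5n+3}$-component of $\varphi(-q)^4S(q)$ modulo $5$ is also fine. One slip: to count the sum of parts the marking must be $(1-z^kq^k)^{-1}$, not $(1-zq^k)^{-1}$; your $S$ is nonetheless right.

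The gap is that this component is never computed, and neither route you sketch works as written.
\begin{itemize}
\item \emph{First route.} Splitting $S$ by $k\bmod 5$ is not a $5$-dissection, since for fixed $k$ the exponents $km$ run through all residue classes. Also, $qD\log X(q^5)=5\,(qD\log X)(q^5)$ vanishes mod $5$ and lives on multiples of $5$. So log-derivatives of $X(q^5),Y(q^5)$ cannot produce the nonzero classes.
\item \emph{Second route.} $\tfrac15 qD(\varphi(-q)^5)=\varphi(-q)^4\,qD\varphi(-q)=-\varphi(-q)^5T$ is not a piece of $\varphi(-q)^4S$. The correct relation, with $T=2B(q)-2B(q^2)$, is $\varphi(-q)^4T=-\tfrac14qD(\varphi(-q)^4)$. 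This gives $\varphi(-q)^4S=-\tfrac18qD(\varphi(-q)^4)-\varphi(-q)^4B(q^2)$. The term $\varphi(-q)^4B(q^2)$ is where all the work lies, and you give no way to dissect it.
\end{itemize}

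The missing idea, which is what the paper uses, is to absorb each Lambert series into a logarithmic derivative of an eta-product whose $5$-dissection is known. This uses $E(q)B(q)=-qDE(q)$ and the fact that mod $p$ the operator $qD$ multiplies the $r$-th dissection component by $r$. Concretely:
\begin{itemize}
\item $\varphi(-q)^4\equiv\frac{E(q^5)}{E(q^{10})}E_3(q)E(q^2)$ and $\varphi(-q)^4B(q^2)\equiv\frac{E(q^5)}{E(q^{10})}E_3(q)\,E(q^2)B(q^2)$.
\item Insert $E_3(q)\equiv J_{25,10}+2qJ_{25,5}$ and $E(q)B(q)\equiv qJ_{25,0}+2q^2J_{25,0}J_{25,5}/J_{25,10}$.
\item The $q^{5n+3}$-components of $-\tfrac18qD(\varphi(-q)^4)$ and of $\varphi(-q)^4B(q^2)$ both come out as $2q^3J_{25,5}J_{50,0}E(q^5)/E(q^{10})$, so they cancel.
\end{itemize}
The paper does exactly this, packaged as $E_3(q)B(q)\equiv qJ_{25,5}$ together with the dissection of $E(q^2)B(q^2)$.

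Your modular-forms fallback also does not work as stated. Because of the constant term, $S$ has mixed weight mod $5$. The pieces $\varphi(-q)^4(2E_6(q^2)-E_6(q))$ (weight $8$) and $\varphi(-q)^4$ (weight $2$) differ in weight by $6\not\equiv0\pmod 4$, so multiplying by $E_4\equiv1$ cannot place them in a single space. You would have to handle the two weight classes separately, for example via $\Theta$-operators at level prime to $5$, and that is not set up.
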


\begin{definition}
Let $\text{SEDO}(n)$ and $\text{SODO}(n)$ respectively denote the sum of all the even parts and the sum of all the odd parts in all the partitions of $n$ without repeated odd parts.
\end{definition}

Then we have the following.

\begin{theorem}
\mylabel{thm:SEDOcong}
$$\mathrm{SEDO}(5n) \equiv 0 \pmod{5},$$
$$\mathrm{SEDO}(7n) \equiv 0 \pmod{7}.$$
In other words, the sum of all the even parts in all the partitions of $5n$ without repeated odd parts is divisible by $5$ and that in all the partitions of $7n$ without repeated odd parts is divisible by $7$.
\end{theorem}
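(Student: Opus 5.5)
The plan is to reduce the statement to a question about the coefficients of a single $q$-series. That series is a product of classical theta series, and its exponents and weights can be controlled modulo $p\in\{5,7\}$.

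\textbf{Generating function.} Write $f_k=(q^k;q^k)_\infty$. Partitions without repeated odd parts are generated by $(-q;q^2)_\infty/(q^2;q^2)_\infty=f_2/(f_1f_4)$. Marking the even parts in the usual way (differentiate with respect to an auxiliary variable attached to the even parts) gives
\begin{equation*}
\sum_{n\ge0}\mathrm{SEDO}(n)q^n=\frac{f_2}{f_1f_4}\sum_{m\ge1}\frac{2m\,q^{2m}}{1-q^{2m}}
=-\frac{q\frac{d}{dq}f_2}{f_1f_4},
\end{equation*}
since $\sum_{m\ge1} 2m\,q^{2m}/(1-q^{2m})=-q\frac{d}{dq}\log f_2$.

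\textbf{Reduction modulo $p$.} For a prime $p$ we have $f_k^p\equiv f_{kp}\pmod p$. Hence
\begin{equation*}
\sum_{n\ge0}\mathrm{SEDO}(n)q^n\equiv -\frac{1}{f_pf_{4p}}\,f_1^{p-1}f_4^{p-1}\,q\frac{d}{dq}f_2 \pmod p.
\end{equation*}
The factor $1/(f_pf_{4p})$ is a series in $q^p$. So it suffices to show that the coefficient of $q^{pn}$ in
\begin{equation*}
A_p(q):=f_1^{p-1}f_4^{p-1}\,q\frac{d}{dq}f_2
\end{equation*}
vanishes mod $p$.

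\textbf{Theta expansions.} I would expand every factor as a theta series. Euler's pentagonal theorem gives
\begin{equation*}
q\frac{d}{dq}f_2=\sum_k(-1)^k\,k(3k-1)\,q^{k(3k-1)},
\end{equation*}
and Jacobi's identity gives $f_1^3=\sum_{j\ge0}(-1)^j(2j+1)q^{j(j+1)/2}$.
\begin{itemize}
\item For $p=7$, I write $f_1^6f_4^6=(f_1^3)^2(f_4^3)^2$. This is a fourfold Jacobi sum times the weighted pentagonal sum.
\item For $p=5$, I write $f_1^4f_4^4=f_1^3f_4^3\cdot f_1f_4$, using Jacobi for the cubes and Euler for $f_1$ and $f_4$.
\end{itemize}

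\textbf{Exponent analysis.} A general term has exponent $E=\sum T(j_i)+4\sum T(l_i)+(\text{pentagonal terms})+k(3k-1)$, where $T(j)=j(j+1)/2$. Multiplying $E$ by $8$ or $24$ and completing squares, $E\equiv0\pmod p$ becomes a quadratic congruence
\begin{equation*}
\sum c_i X_i^2\equiv c\pmod p
\end{equation*}
in the variables $X_i=2j_i+1$, $6k-1$, and so on. The point is that $c$ equals the sum of the constant offsets $-1/8$, $-4/8$, $-1/24$, \dots, times the common factor. I expect that for $p=5$ and $p=7$ this constant is $\equiv0\pmod p$, and that the form is anisotropic enough mod $p$ that each solution forces some variable $X_i\equiv0\pmod p$.

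Whenever $X_i\equiv0$, the matching weight ($2j+1$ from a Jacobi factor, or $6k-1$ from the derivative weight $k(3k-1)$) is divisible by $p$, so the term is $\equiv0$. For the unweighted pentagonal factors in the $p=5$ case, I would instead pair terms: where a variable is forced to the residue $0$, the involution $X\mapsto -X$ on that variable should cancel terms in pairs, using the sign $(-1)^j$, as in the classical proofs of $p(5n+4)\equiv0$ and $p(7n+5)\equiv0$.

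\textbf{Main obstacle.} The hardest step is this residue-counting argument: showing that every solution of the quadratic congruence carries a weight divisible by $p$, or else cancels in pairs. With four to five variables the form is not automatically anisotropic. If a direct argument fails, I would first use theta identities to collapse factors before counting. Examples are $f_1f_4\cdot f_2^{-1}=\psi(-q)$ and the $5$-dissection of $f_1$ (respectively the $7$-dissection of $f_1^3$). These reduce the number of variables to two or three, where the residue check is a finite verification.
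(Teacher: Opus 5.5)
Your reduction is sound. The generating function $-q\frac{d}{dq}f_2/(f_1f_4)$ is right, and because $1/(f_pf_{4p})$ is an invertible series in $q^p$, the theorem is equivalent to the $q^{pn}$-coefficients of $A_p$ vanishing mod $p$. The gap is the step meant to prove this: you do not carry it out, and the expectations you state for it are false.

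\textbf{Why the residue count fails.} Complete squares with $X_i=2j_i+1$, $Y_i=2l_i+1$ (Jacobi), $U=6m-1$, $V=6n-1$ (Euler) and $Z=6k-1$ (from $q\frac{d}{dq}f_2$). This gives $24E=3X^2+12Y^2+U^2+4V^2+2Z^2-22$ for $p=5$ and $24E=3X_1^2+3X_2^2+12Y_1^2+12Y_2^2+2Z^2-32$ for $p=7$. So the constant is not $0$ mod $p$.

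The weight is also misidentified. The derivative weight is $k(3k-1)=(Z^2-1)/12$, which is divisible by $p$ exactly when $Z\equiv\pm1$, not when $Z\equiv 0$.

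As a result, some single terms have exponent divisible by $p$ and weight prime to $p$:
\begin{itemize}
\item for $p=5$, $(j,l,m,n,k)=(0,0,0,-1,1)$ contributes $2q^{10}$;
\item for $p=7$, $(j_1,j_2,l_1,l_2,k)=(2,2,0,1,-1)$ contributes $300q^{14}$, and $7\nmid 300$.
\end{itemize}
So the coefficients vanish only through cancellation among many terms. That amounts to a genuine theta-function identity mod $p$, and residue counting in this factorization cannot produce it.

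The pairing $X\mapsto -X$ does not rescue this. It is not an involution of the pentagonal index set, since $6k-1$ runs only over the class $-1 \bmod 6$. The collapses you list do not help either. After $p$-dissecting, the residue classes supported by the individual factors of $A_p$ already add up to every class mod $p$.

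\textbf{What is missing.} You need a choice of auxiliary factor that leaves only two one-variable series, each supported on few residues.

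For $p=5$, use $1/(f_1f_4)=f_1(-q)/f_2^3$ instead of $f_1^4f_4^4/(f_5f_{20})$. The generating function becomes $-\frac{1}{3}f_1(-q)\,q\frac{d}{dq}(f_2^3)/f_2^5$. Mod $5$, the surviving terms of $q\frac{d}{dq}(f_2^3)$ have exponent $\equiv 2$: triangular numbers $\equiv 0$ are killed by the derivative, and those $\equiv 3$ force $5\mid 2j+1$. Pentagonal numbers are $\equiv 0,1,2$, so no exponent is $\equiv 0$. This is the paper's mod $5$ argument; after $q\mapsto -q$ it reads $\sum\mathrm{SEDO}(n)(-q)^n\equiv 2f_1E_3(q^2)B(q^2)/f_{10}$ with $E_3(q)B(q)\equiv qJ_{25,5}$.

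For $p=7$ the paper needs one more idea. Take $\mathrm{DO}=f_1/f_2^2$, which is your $f_2/(f_1f_4)$ at $-q$.
\begin{enumerate}
\item The identity $q\frac{d}{dq}\mathrm{DO}=\mathrm{DO}\,(-B(q)+4B(q^2))$ trades $\mathrm{DO}\,B(q^2)$ for $\mathrm{DO}\,B(q)$ up to a $q$-derivative, whose $q^{7n}$-coefficients vanish mod $7$.
\item Then $\mathrm{DO}\,B(q)=f_1^{-7}(f_1^5/f_2^2)E_3(q)B(q)$, a product of two one-variable series times a series in $q^7$.
\item The terms of $f_1^5/f_2^2=\sum(6n+1)q^{(3n^2+n)/2}$ with weight prime to $7$ lie in residues $0,1,5$. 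Also $E_3(q)B(q)\equiv qJ_{49,14}+2q^3J_{49,7}$ lies in residues $1,3$.
\item The sumset $\{0,1,5\}+\{1,3\}$ misses $0$.
\end{enumerate}
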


\begin{theorem}
\mylabel{thm:SODOcong}
$$\mathrm{SODO}(5n) \equiv 0 \pmod{5},$$
$$\mathrm{SODO}(7n) \equiv 0 \pmod{7},$$
$$\mathrm{SODO}(7n+2) \equiv 0 \pmod{7}.$$
In other words, the sum of all the odd parts in all the partitions of $5n$ without repeated odd parts is divisible by $5$ and that in all the partitions of $7n$ and $7n+2$ without repeated odd parts is divisible by $7$. 
\end{theorem}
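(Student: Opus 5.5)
The plan is to work with the generating function and reduce as much as possible to results already stated. Write $D=q\,\frac{d}{dq}$ and let $P(q)=\frac{(-q;q^2)_\infty}{(q^2;q^2)_\infty}=\sum a(n)q^n$ be the generating function for partitions without repeated odd parts. Marking the odd parts gives
\[
\sum_{n\ge0}\mathrm{SODO}(n)q^n=P(q)\sum_{k\ge1}\frac{(2k-1)q^{2k-1}}{1+q^{2k-1}}=\frac{D\,(-q;q^2)_\infty}{(q^2;q^2)_\infty}.
\]
Similarly, $\sum\mathrm{SEDO}(n)q^n=P(q)\sum_{k\ge1}\frac{2kq^{2k}}{1-q^{2k}}$. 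The sum of all parts is $n\,a(n)$, so
\[
\mathrm{SODO}(n)+\mathrm{SEDO}(n)=n\,a(n).
\]

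\emph{Step 1: the cases $5n$ and $7n$.} For $p\in\{5,7\}$ and $N\equiv 0\pmod p$ the identity gives $\mathrm{SODO}(N)\equiv-\mathrm{SEDO}(N)\pmod p$. Theorem \thm{SEDOcong} then yields $\mathrm{SODO}(5n)\equiv0\pmod 5$ and $\mathrm{SODO}(7n)\equiv0\pmod 7$ at once.

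\emph{Step 2: the case $7n+2$.} Here the shortcut only gives $\mathrm{SODO}(7n+2)\equiv 2a(7n+2)-\mathrm{SEDO}(7n+2)\pmod 7$, so I would attack the generating function directly. By the Jacobi triple product, $(-q;q^2)_\infty(q^2;q^2)_\infty=\varphi(q)=\sum_{m\in\mathbb Z}q^{m^2}$. Modulo $7$, $(q^2;q^2)_\infty^7\equiv(q^{14};q^{14})_\infty$, and $D$ kills any power series in $q^{14}$ modulo $7$. Writing $(-q;q^2)_\infty\equiv\varphi(q)(q^2;q^2)_\infty^6/(q^{14};q^{14})_\infty$, we get
\[
\sum\mathrm{SODO}(n)q^n\equiv\frac{(q^2;q^2)_\infty^6\,D\bigl[\varphi(q)(q^2;q^2)_\infty^6\bigr]}{(q^{14};q^{14})_\infty^{2}}\pmod 7 .
\]
It then suffices to show that the coefficients of $q^{7n+2}$ in the numerator vanish modulo $7$. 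I would lower the exponents before expanding. One route is to multiply through by a suitable power of $(q^2;q^2)_\infty^7\equiv(q^{14};q^{14})_\infty$ so that only small powers of $(q^2;q^2)_\infty$ remain. The other is to write $(q^2;q^2)_\infty^3=\sum_{j\ge0}(-1)^j(2j+1)q^{j(j+1)}$ (Jacobi) and use the product rule $D[\varphi F]=F\,D\varphi+\varphi\,DF$. This turns the numerator into finite sums of terms of the form (polynomial weight)$\times q^{Q}$, where $Q$ is a quadratic form in the summation indices, such as $Q=m^2+j(j+1)+\cdots$.

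\emph{Step 3: the residue analysis.} Complete the square, for example $4(j(j+1)+\ell(\ell+1)+m^2)+2=(2j+1)^2+(2\ell+1)^2+4m^2$, and determine which residue classes of the indices modulo $7$ can produce total exponent $\equiv 2\pmod 7$. The aim is to show that each such contribution either carries a weight factor ($2j+1$, $m$, or an exponent coming from $D$) divisible by $7$, or cancels in pairs under a symmetry such as $m\mapsto -m$ or $j\mapsto -1-j$. As an alternative route to the same congruence, one could instead prove $\mathrm{SEDO}(7n+2)\equiv 2a(7n+2)\pmod 7$ by the analogous dissection of $P(q)\sum 2kq^{2k}/(1-q^{2k})$.

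\emph{Main obstacle.} I expect Step 3 to be the real difficulty. With squares modulo $7$ in $\{0,1,2,4\}$, the condition "sum $\equiv 3\pmod 7$" admits several residue patterns. These do not all force a factor of $7$ individually, so I would first try to lower the exponent of $(q^2;q^2)_\infty$ as much as possible, aiming at a product of at most two theta series. If a pattern survives, I would try pairing terms to get cancellation. As a fallback, I would use the classical $7$-dissection of $(q;q)_\infty$ (Ramanujan's $\frac{(q;q)_\infty}{(q^{49};q^{49})_\infty}=\frac{A}{B}-q-q^2\frac{B}{C}+q^5\frac{C}{A}$-type identity) with $q\to q^2$ to isolate the relevant residue class explicitly.
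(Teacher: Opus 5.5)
Your Step 1 is correct and is exactly the paper's argument: since $\mathrm{SODO}(n)+\mathrm{SEDO}(n)=n\,a(n)$, the cases $5n$ and $7n$ follow from Theorem~\ref{thm:SEDOcong}. The case $7n+2$, however, is not proved. First, the factorization you build on is false. In fact $(-q;q^2)_\infty(q^2;q^2)_\infty=(-q;-q)_\infty=E(-q)=1+q-q^2-q^5-\cdots$, the pentagonal series, whereas $\varphi(q)=(-q;q^2)_\infty^2(q^2;q^2)_\infty$ (the coefficients of $q$ are $1$ and $2$). Your mod-$7$ reduction still holds once $\varphi(q)$ is replaced by $E(-q)$, but the quadratic forms and completed squares in Step 3 belong to the wrong series. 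More seriously, Step 3 is not an argument. After expanding $(q^2;q^2)_\infty^{6}\,D\bigl[E(-q)(q^2;q^2)_\infty^{6}\bigr]$ you face a five-fold weighted sum. You never show that its $q^{7n+2}$ part vanishes mod $7$, and you yourself expect some residue patterns to survive.

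There is good reason to expect that termwise divisibility or a simple index symmetry will not finish the job. The paper replaces $q$ by $-q$ and uses, in your notation,
\[
\sum_{n\ge0}(-1)^n\mathrm{SODO}(n)q^n=\frac{D(q;q^2)_\infty}{(q^2;q^2)_\infty}=\tfrac12\bigl(D\,\mathrm{DO}(q)-\mathrm{DO}(q)B(q)\bigr),\qquad \mathrm{DO}(q)=\frac{(q;q)_\infty}{(q^2;q^2)_\infty^2},
\]
which follows from $D\,\mathrm{DO}=\mathrm{DO}\,(-B(q)+4B(q^2))$. Both pieces have explicit $7$-dissections into theta products:
\begin{itemize}
\item[] for $\mathrm{DO}\cdot B$, combine the quintuple-product series for $(q;q)_\infty^5/(q^2;q^2)_\infty^2$ with $E_3(q)B(q)\equiv qJ_{49,14}+2q^3J_{49,7}\pmod 7$, which is read off from the dissection of $E_3$ via $DE_3=-3E_3B$;
\item[] for $\mathrm{DO}$, multiply the series for $(q^2;q^2)_\infty^5/(q;q)_\infty^2$ by the dissection of $E_3(q)$.
\end{itemize}
The $q^{7n+2}$ components of $D\,\mathrm{DO}$ and of $\mathrm{DO}\cdot B$ are each nonzero mod $7$. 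They coincide only because of the product identities $J_{2,0}J_{14,0}^2=J_{14,2}J_{14,4}J_{14,6}$ and $J_{1,0}J_{14,0}^{11/2}=\sqrt{J_{14,7}}\,J_{14,1}J_{14,2}\cdots J_{14,6}$. So the missing ingredient is a dissection into theta products followed by a genuine identity between two different products, not a residue count. Your fallback, dissecting $E(-q)$ and $E_3(q^2)$ directly, could in principle be carried through. But it mixes $J_{49,\cdot}$ factors with $J_{7,\cdot}(q^{14})$ factors and would need its own theta-function identity, which the proposal does not supply.
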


\begin{theorem}
\mylabel{thm:SEDO-SODOcong}
$$\mathrm{SEDO}(7n+5)-\mathrm{SODO}(7n+5) \equiv 0 \pmod{7}.$$
In other words, the sum of all the even parts minus the sum of all the odd parts in all the partitions of $7n+5$ without repeated odd parts is divisible by $7$.
\end{theorem}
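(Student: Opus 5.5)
The plan is to find a closed form for the generating function of $\mathrm{SEDO}(n)-\mathrm{SODO}(n)$, turn it by a sign change into a derivative of an eta-quotient, and then use the Frobenius congruence $f(q)^7\equiv f(q^7)\pmod 7$ to reduce the claim to a statement about a product of classical theta series. The first two steps are routine; the last is where the difficulty lies.

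\textbf{Step 1: the generating function.} Partitions without repeated odd parts are generated by $P(q)=(-q;q^2)_\infty/(q^2;q^2)_\infty$. Marking each part by its size (logarithmic differentiation in an auxiliary variable $z$ at $z=1$) gives
\[
\sum_{n\ge0}\bigl(\mathrm{SEDO}(n)-\mathrm{SODO}(n)\bigr)q^n
=P(q)\Bigl(\sum_{k\ \mathrm{even}}\frac{kq^k}{1-q^k}-\sum_{k\ \mathrm{odd}}\frac{kq^k}{1+q^k}\Bigr).
\]
For odd $k$ we have $-kq^k/(1+q^k)=k(-q)^k/(1-(-q)^k)$, and for even $k$ the summand is unchanged when $q\mapsto -q$. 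So the bracket equals $S(-q)$, where
\[
S(q)=\sum_{k\ge1}\frac{kq^k}{1-q^k}=-\frac{\theta (q;q)_\infty}{(q;q)_\infty},\qquad \theta=q\frac{d}{dq}.
\]

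\textbf{Step 2: change $q$ to $-q$.} Write $A=(q;q)_\infty$ and $B=(q^2;q^2)_\infty$. Since $P(-q)=A/B^2$, replacing $q$ by $-q$ turns the generating function into
\[
G(-q)=-\frac{\theta A}{B^2}.
\]
This substitution only changes coefficient signs. Hence it suffices to show that the coefficient of $q^{7n+5}$ in $\theta(A)/B^2$ is divisible by $7$.

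\textbf{Step 3: clear the denominator modulo $7$.} Since $B^7\equiv(q^{14};q^{14})_\infty\pmod 7$ is a series in $q^7$, it suffices to show that the coefficient of $q^{7n+5}$ in $\theta(A)\,B^5$ vanishes modulo $7$. Write $B^5=B^3\cdot B^2=B^3\cdot A\,\psi(q)$, where $\psi(q)=B^2/A=\sum_{j\ge0}q^{j(j+1)/2}$. Then $\theta(A)B^5=\theta(A)\cdot A\cdot\psi(q)\cdot B^3$, and each factor is a lacunary theta series:
\begin{itemize}
\item Euler's pentagonal theorem gives $A=\sum_{\ell}(-1)^\ell q^{\ell(3\ell-1)/2}$ and $\theta A=\sum_k(-1)^k\frac{k(3k-1)}{2}q^{k(3k-1)/2}$;
\item Jacobi's identity gives $B^3=\sum_{m\ge0}(-1)^m(2m+1)q^{m(m+1)}$;
\item $\psi(q)=\sum_{j\ge0}q^{j(j+1)/2}$.
\end{itemize}
The coefficient of $q^N$ is therefore a weighted count of quadruples $(k,\ell,j,m)$ with $N=\frac{k(3k-1)}{2}+\frac{\ell(3\ell-1)}{2}+\frac{j(j+1)}{2}+m(m+1)$. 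Each quadruple carries the weight $\pm\frac{k(3k-1)}{2}(2m+1)$.

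\textbf{Step 4: the quadratic-form analysis (main obstacle).} Put $x=6k-1$, $y=6\ell-1$, $u=2j+1$, $v=2m+1$ and multiply the exponent relation by $24$:
\[
x^2+y^2+3u^2+6v^2=24N+11\equiv 5\pmod 7\qquad\text{for }N=7n+5.
\]
In the same variables the weight is, modulo $7$, a nonzero multiple of $(x^2-1)\,v$. The task is to show that the weighted sum over the residues of $(x,y,u,v)$ modulo $7$ on this quadric is $\equiv0\pmod 7$. I would attempt this in two ways:
\begin{itemize}
\item first, enumerate the residues of $x^2,y^2,u^2,v^2$ among $\{0,1,2,4\}$ and check whether every solution forces $x^2\equiv1$ or $v\equiv0\pmod 7$, which would kill each term individually;
\item if some terms survive, pair them by an involution (for instance $k\mapsto$ the pentagonal partner with the same $x^2$, or interchanging the roles of $x$ and $y$) so that the surviving weights cancel modulo $7$.
\end{itemize}
If neither works with this factorisation, I would switch to $B^5=B^3\cdot B^2$ with $B^2=\varphi(-q^2)\,(q^4;q^4)_\infty$, or rewrite $\theta(A)B^5$ using $\theta(AB^5)$ together with $\theta B=2(\theta A)(q^2)$, and repeat the residue analysis. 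Identifying a decomposition in which this residue analysis actually closes is the step I expect to be hardest.
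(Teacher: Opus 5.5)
Your Steps 1--3 are correct and agree with the paper. After $q\mapsto -q$, your series $-\theta(A)/B^2$ is exactly the paper's $\mathrm{DO}(q)B(q)$, because the paper's $B(q)=\sum_{n\ge1}q^n/(1-q^n)^2$ is your $S(q)$.

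The gap is Step 4. It carries the entire content of the proof, it is left open, and for the factorization $\theta(A)\cdot A\cdot\psi\cdot B^3$ you chose, your first strategy provably fails. Already for $N=5$, the quadruple $(k,\ell,j,m)=(2,0,0,0)$ has weight $\frac{k(3k-1)}{2}(2m+1)=5\not\equiv0\pmod 7$. Here $x^2+y^2+3u^2+6v^2=121+1+3+6=131\equiv5$, with $x^2\equiv2$ and $v\equiv1$. The coefficient of $q^5$ in $\theta(A)B^5$ vanishes only because the three quadruples with $k=1$ contribute $1-3-3=-5$. Structurally, modulo $7$ your four factors are supported on the exponent classes $\{1,2,5\}$, $\{0,1,2,5\}$, $\{0,1,3,6\}$ and $\{0,2,6\}$, and the first two alone already reach every class. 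Nor is a ``weighted sum over residues'' a finite check. The number of representations in each residue class and the signs $(-1)^{k+\ell+m}$ are not determined modulo $7$, so a residue argument works only if every term vanishes individually. You give no involution that would replace this.

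The missing idea is to clear the denominator with $A^7$ rather than $B^7$, and to absorb the derivative into a cube (with your $\theta=q\frac{d}{dq}$):
\[
\frac{\theta(A)}{B^2}=\frac{1}{3}\,\theta\bigl(A^3\bigr)\cdot\frac{A^5}{B^2}\cdot\frac{1}{A^7},\qquad A^7\equiv(q^7;q^7)_\infty\pmod 7.
\]
By Jacobi's identity, $\theta(A^3)=\sum_{m\ge0}(-1)^m(2m+1)\frac{m(m+1)}{2}q^{m(m+1)/2}$ is supported modulo $7$ on exponents $\equiv1,3$. On the class $0$ the factor $\frac{m(m+1)}{2}$ vanishes, and on the class $6$ the factor $2m+1$ does. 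By the quintuple-product identity \eqn{etaprodsum1}, $A^5/B^2=\sum_n(6n+1)q^{(3n^2+n)/2}$ is supported modulo $7$ on exponents $\equiv0,1,5$, since on the class $2$ one has $6n+1\equiv0$. Since $\{1,3\}+\{0,1,5\}=\{1,2,3,4,6\}$ omits $5$, every individual contribution to an exponent $\equiv5\pmod 7$ vanishes.

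In your own language, put $u=2m+1$ and $w=6n+1$. The relation becomes $3u^2+w^2=24N+4\equiv5\pmod 7$, with weight proportional to $u(u^2-1)w$. The conditions $u\not\equiv0$, $u^2\not\equiv1$, $w\not\equiv0$ force $3u^2+w^2\in\{0,1,2,3,6\}$. So your first-bullet method does close, but only for this grouping. This is precisely the paper's argument: the $7$-dissection of $E_3(q)B(q)$ combined with Lemma \lem{etaprodsumdiss7}. The identity for $A^5/B^2$ is the ingredient that none of your proposed rewritings supplies.
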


The rest of the paper is organized as follows. In the next section, we collect some preliminary results. Section 3 is devoted to finding dissections of some objects that we will use to prove our main results in the subsequent sections. We prove the congruences for overpartitions in Section 4 and those for partitions without repeated odd parts in Section 5.

\section{Preliminaries}
Let $N,m,n$ be non-negative integers. Throughout we use the following standard notations.
\begin{align*}
(a)_N = (a;q)_N &:= \prod_{k=0}^{N-1}(1-aq^k),
\\
(a)_{\infty} = (a;q)_{\infty} &:= \lim_{N \mapsto \infty}(a)_N\,\,\text{where}\,\,\lvert q\rvert<1.
\end{align*}
\\
We define the products
$$J_{b,a}(q) = J_{b,a} := (q^a; q^b)_\infty (q^{b-a};q^b)_\infty (q^b;q^b)_\infty,$$
$$J_{b,0}(q):=(q^b;q^b)_\infty.$$
This satisfies 
\begin{equation}
\label{eq:Jrule}
J_{b,a+b}(q)=-q^{-a}J_{b,a}(q).
\end{equation}
Throughout the rest of the paper, the base in $J_{b,a}$ is meant to be $q$ when we do not mention it explicitly.
\\\\
For a formal power series
$$F(q) = \sum_{n=0}^\infty f(n) q^n,$$
we define the operator
\begin{equation}
\label{eq:Updef}
U_{p,a}(F) := \sum_{n=0}^\infty f(pn +a) q^n.
\end{equation}
If
$$F(q)=\sum_{r=0}^{p-1}q^rF_r(q^p)$$
is a \(p\)-dissection of $F(q)$, where $p$ is a prime, then
$$q\frac{d}{dq}F(q)=\sum_{r=0}^{p-1}q^r\left(rF_r(q^p)+pq^pF_r'(q^p)\right).$$
Hence,
\begin{equation}
\label{eq:Fdissder}
q\frac{d}{dq}F(q)\equiv \sum_{r=0}^{p-1}r q^rF_r(q^p)\pmod p.\end{equation}
Euler's pentagonal number theorem, Jacobi's identity, Jacobi's triple product identity and Watson's quintuple product identity respectively are 
\begin{equation}
\label{eq:PNT}
E(q):=(q;q)_\infty=\sum_{n=-\infty}^{\infty} (-1)^n q^{\frac{n(3n-1)}{2}},
\end{equation}
\begin{equation}
\label{eq:Jacobi}
E_3(q):=(q;q)_\infty^3 = \sum_{n=0}^{\infty} (-1)^n (2n+1) q^{\frac{n(n+1)}{2}},
\end{equation}
\begin{equation}
\label{eq:JTP}
\sum\limits_{n=-\infty}^{\infty}(-1)^nz^nq^{\frac{n(n-1)}{2}}=(zq;q)_\infty\left(z^{-1} q;q\right)_\infty (q;q)_\infty,
\end{equation}
and
\begin{equation}
\label{eq:QTP}
\sum_{n=-\infty}^{\infty}\left(z^{3n}-z^{-3n-1}\right)q^{\frac{n(3n+1)}{2}} =(q;q)_\infty (z;q)_\infty (z^{-1}q;q)_\infty(z^2q;q^2)_\infty (z^{-2}q;q^2)_\infty.
\end{equation}
Next we note the well known $5$ (see \cite[Lemma 3.18]{Ga88}) and $7$-dissections (see \cite[Section 3.6]{An-Be12} or \cite[Section 5]{Ga88}) of $E(q)$
\begin{equation}
\label{eq:Ediss5}
E(q)=\frac{J_{25,0}J_{25,10}}{J_{25,5}}-qJ_{25,0}-q^2\frac{J_{25,0}J_{25,5}}{J_{25,10}},
\end{equation}
\begin{equation}
\label{eq:Ediss7}
E(q) = \frac{J_{49,14} J_{49,0}}{J_{49,7}}-\frac{q J_{49,21} J_{49,0}}{J_{49,14}}-q^{2} J_{49,0}+\frac{q^{5} J_{49,7} J_{49,0}}{J_{49,21}},
\end{equation}
and the following identities that follow from the quintuple product identity (see \cite[Equations 7 and 8]{On-Ro95})
\begin{equation}
\label{eq:etaprodsum1}
\frac{(q;q)_\infty^5}{(q^2;q^2)_\infty^2}=\sum_{n=-\infty}^{\infty}(6n+1)\,q^{\frac{3n^{2}+n}{2}},
\end{equation}
\begin{equation}
\label{eq:etaprodsum2}
\frac{(q^2;q^2)_\infty^5}{(q;q)_\infty^2}=\sum_{n=-\infty}^{\infty}
(-1)^n(3n+1)q^{3n^2+2n}.
\end{equation}
We also need Weierstrass' identity below for products of four theta functions.
\begin{theorem}
\mylabel{thm:Weier}{\cite[Theorem 4.1]{Br-Th22}}
For any non-zero complex numbers $a,b,c$ and $d$,
$$\theta(d)\,\theta\left(\frac{b}{c}\right)\,\theta(abc)\,\theta(ad)=\theta(b)\,\theta\left(\frac{d}{c}\right)\,\theta(acd)\,\theta(ab)
-\frac{b}{c}\,\theta(c)\,\theta\left(\frac{d}{b}\right)\,\theta(abd)\,\theta(ac),$$
where the function $\theta$ is defined by
$$\theta(z):=\theta(z;q)=(z;q)_\infty \left(\frac{q}{z};q\right)_\infty (q;q)_\infty.$$
\end{theorem}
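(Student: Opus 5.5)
The statement (Theorem \thm{Weier}) is Weierstrass' four-term identity. The plan is to regard both sides as functions of $d$ with $a,b,c$ fixed and generic, and to compare them through an elliptic-function argument. The only property of $\theta$ needed, besides its zeros (simple, exactly at $z\in q^{\mathbb Z}$), is the functional equations
$$\theta(qz)=-z^{-1}\theta(z),\qquad \theta(1/z)=-z^{-1}\theta(z).$$
Both follow directly from the product definition.

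\textbf{Step 1: a common multiplier.} Under $d\mapsto qd$, each of the three four-fold products picks up the same factor $1/(ad^2)$:
\begin{align*}
\theta(d)\theta(ad)&: \quad (-d^{-1})(-(ad)^{-1}),\\
\theta(d/c)\theta(acd)&: \quad (-c/d)(-(acd)^{-1}),\\
\theta(d/b)\theta(abd)&: \quad (-b/d)(-(abd)^{-1}).
\end{align*}
The remaining factors do not involve $d$. Hence $F(d):=\mathrm{LHS}-\mathrm{RHS}$ satisfies $F(qd)=F(d)/(ad^2)$. It follows that
$$f(d):=\frac{F(d)}{\theta(d)\,\theta(ad)}$$
is invariant under $d\mapsto qd$, i.e. it is an elliptic function on $\mathbb{C}^*/q^{\mathbb Z}$. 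Its only possible poles are simple ones at $d\in q^{\mathbb Z}$ and $d\in a^{-1}q^{\mathbb Z}$.

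\textbf{Step 2: the poles are removable.} I will check that $F$ vanishes at $d=1$ and at $d=1/a$; by quasi-periodicity this covers the whole orbits.
\begin{itemize}
\item At $d=1$ the left side is $0$ because $\theta(1)=0$. Using $\theta(1/c)=-\theta(c)/c$ and $\theta(1/b)=-\theta(b)/b$, both terms on the right become $-\theta(b)\theta(c)\theta(ab)\theta(ac)/c$, so they cancel.
\item At $d=1/a$ the left side vanishes because $\theta(ad)=\theta(1)=0$. On the right, $\theta(1/(ac))=-\theta(ac)/(ac)$ and $\theta(1/(ab))=-\theta(ab)/(ab)$ turn both terms into $-\theta(b)\theta(c)\theta(ab)\theta(ac)/(ac)$, so again they cancel.
\end{itemize}
Thus $f$ is a holomorphic elliptic function, hence constant by Liouville.

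\textbf{Step 3: the constant is zero.} Evaluate at a third point, $d=c$, where the first right-hand term vanishes. The claim reduces to
$$\theta(c)\theta(b/c)\theta(abc)\theta(ac)=-\tfrac{b}{c}\,\theta(c)\theta(c/b)\theta(abc)\theta(ac).$$
This holds since $\theta(c/b)=-(c/b)\theta(b/c)$. For generic $a,b,c$ the point $d=c$ is not a zero of $\theta(d)\theta(ad)$, so $f(c)=0$ and therefore $f\equiv 0$. The identity extends to all nonzero $a,b,c,d$ by continuity, since both sides are holomorphic in $(a,b,c,d)\in(\mathbb{C}^*)^4$.

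\textbf{Expected obstacle.} The step needing most care is the pole analysis in Step 2. One must confirm that $\theta$ has simple zeros exactly at $q^{\mathbb Z}$, so that $f$ has at most simple poles at the two orbits and these are genuinely cancelled. One must also handle degenerate parameter choices, such as $a\in q^{\mathbb Z}$ where the two orbits merge, or $c$ lying on one of them. The cleanest route is to prove the identity for generic $(a,b,c)$ as above and then pass to all parameters by continuity. The remaining verifications are routine applications of the two functional equations.
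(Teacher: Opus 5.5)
The paper does not prove this statement. It quotes Weierstrass' identity from Bradley-Thrush \cite[Theorem 4.1]{Br-Th22} and uses it only through the specialization $a=b=q$, $c=q^3$, $d=q^2$ to get $C_1=0$ and $C_2=0$. Your argument is therefore a genuinely different route: a self-contained proof by the standard Liouville argument. It is correct.

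I checked the details:
\begin{itemize}
\item Under $d\mapsto qd$, each of the three $d$-dependent pairs $\theta(d)\theta(ad)$, $\theta(d/c)\theta(acd)$ and $\theta(d/b)\theta(abd)$ picks up exactly the factor $1/(ad^2)$.
\item At $d=1$ and at $d=1/a$, the two right-hand terms coincide via $\theta(1/z)=-z^{-1}\theta(z)$.
\item At $d=c$, the claim reduces to $\theta(c/b)=-(c/b)\,\theta(b/c)$.
\end{itemize}
The genericity you need is $a\notin q^{\mathbb{Z}}$, so the two pole orbits are disjoint and the poles of $f$ are simple. You also need $c, ac\notin q^{\mathbb{Z}}$, so that $F(c)=0$ forces the constant to vanish. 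The excluded parameter set is a locally finite union of hypersurfaces in $(\mathbb{C}^*)^3$, so the continuity step is sound.

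Two small remarks:
\begin{itemize}
\item State $0<|q|<1$ explicitly, since the whole argument lives on $\mathbb{C}^*/q^{\mathbb{Z}}$.
\item Step 3 is even shorter at $d=b$. There $\theta(d/b)=\theta(1)=0$ kills the second right-hand term, and the first right-hand term coincides verbatim with the left side, so no functional equation is needed. This evaluation requires $b, ab\notin q^{\mathbb{Z}}$ instead.
\end{itemize}

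As for what each approach buys: the citation keeps the paper short. Your proof makes the paper self-contained, in the spirit of its abstract, which advertises proofs depending only on classical theta function identities. It costs only a few lines, since every verification is a routine consequence of the two functional equations $\theta(qz)=-z^{-1}\theta(z)$ and $\theta(1/z)=-z^{-1}\theta(z)$.
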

\noindent
\\
Finally to aid our calculations in the proofs below, let us define 
$$B(q)=\sum_{n=1}^{\infty}\frac{q^n}{(1-q^n)^2},$$
and 
$$\mathrm{DO}(n)=\frac{(q;q)_\infty}{(q^2;q^2)_\infty^2}.$$
Then, it is easy to deduce that 
\begin{equation}
\label{eq:Eder}
\bigdot{E}(q):=q\frac{d}{dq}E(q)=-E(q)B(q),
\end{equation}
\begin{equation}
\label{eq:E3der}
\bigdot{E}_3(q):=q\frac{d}{dq}E_3(q)=-3E_3(q)B(q),
\end{equation}
and 
\begin{equation}
\label{eq:DOder}
\bigdot{\mathrm{DO}}(q):=q\frac{d}{dq}\mathrm{DO}(q)=\mathrm{DO}(q)(-B(q)+4B(q^2)).
\end{equation}

\section{Some dissections modulo $5$ and $7$}
In this section, we state and prove two lemmas on dissections of some objects that we will use in the proof of our results in the next two sections.

\begin{lemma}
\label{lem:E3diss57}
\begin{align*}
E_3(q)&\equiv J_{25,10}+2qJ_{25,5} \pmod 5,
\\
E_3(q)&\equiv J_{7,3}(q^7)-3qJ_{7,2}(q^7)-2q^3J_{7,1}(q^7) \pmod 7.
\end{align*}
\end{lemma}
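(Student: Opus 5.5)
The plan is to derive both congruences from Jacobi's identity \eqn{Jacobi}, by splitting the summation index into residue classes modulo $p\in\{5,7\}$ and reading each class as a theta product through the triple product identity \eqn{JTP}.

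\textbf{Step 1: symmetrize.} The map $n\mapsto -n-1$ sends $(-1)^n(2n+1)q^{n(n+1)/2}$ to $(-1)^{n+1}(-(2n+1))q^{n(n+1)/2}$, which is the same term. Hence
$$E_3(q)=\tfrac12\sum_{n=-\infty}^{\infty}(-1)^n(2n+1)q^{\frac{n(n+1)}{2}}.$$

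\textbf{Step 2: reduce the weight modulo $p$.} Put $n=pm+r$ with $0\le r\le p-1$ and $m\in\mathbb{Z}$. Modulo $p$ the weight is $2n+1\equiv 2r+1$, and since $p$ is odd the sign is $(-1)^{n}=(-1)^{m+r}$. The class $r=(p-1)/2$ has $2r+1\equiv 0$, so it vanishes modulo $p$. The involution of Step 1 sends the class $r$ to the class $p-1-r$, so these two classes contribute identically. This pairing cancels the factor $\tfrac12$, which is legitimate because $2$ is invertible modulo $p$. What remains is
$$E_3(q)\equiv\sum_{r=0}^{(p-3)/2}(-1)^r(2r+1)\sum_{m=-\infty}^{\infty}(-1)^m q^{\frac{(pm+r)(pm+r+1)}{2}}\pmod p.$$

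\textbf{Step 3: evaluate each inner sum.} The exponent is
$$\frac{(pm+r)(pm+r+1)}{2}=\frac{r(r+1)}{2}+p^2\,\frac{m(m-1)}{2}+m\,\frac{p(p+2r+1)}{2}.$$
Apply \eqn{JTP} with base $q^{p^2}$ and $z=q^{p(p+2r+1)/2}$. The inner sum becomes $q^{r(r+1)/2}\,J_{p^2,\,p(p+2r+1)/2}(q)$. Since $p$ divides the second index, this equals $J_{p,(p+2r+1)/2}(q^p)$.

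\textbf{Step 4: normalize with \eqn{Jrule} and the symmetry $J_{b,a}=J_{b,b-a}$.}

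For $p=5$:
\begin{itemize}
\item $r=0$ gives $J_{25,15}=J_{25,10}$.
\item $r=1$ gives $-3q\,J_{25,20}=-3q\,J_{25,5}\equiv 2qJ_{25,5}\pmod 5$.
\end{itemize}
This is the first congruence.

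For $p=7$:
\begin{itemize}
\item $r=0$ gives $J_{7,4}(q^7)=J_{7,3}(q^7)$.
\item $r=1$ gives $-3q\,J_{7,5}(q^7)=-3qJ_{7,2}(q^7)$.
\item $r=2$ gives $5q^3J_{7,6}(q^7)=5q^3J_{7,1}(q^7)\equiv -2q^3J_{7,1}(q^7)\pmod 7$.
\end{itemize}
This is the second congruence.

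The main obstacle is bookkeeping rather than ideas. One must confirm that $(p+2r+1)/2$ lands in the range $0<a<p$, so that \eqn{Jrule} is not needed, or else apply it with the correct power of $q$. One must also check that the signs $(-1)^r$ and the reductions of the coefficients $2r+1$ modulo $p$ come out exactly as stated. Before finalizing, I would verify the first few coefficients of each side numerically as a sanity check.
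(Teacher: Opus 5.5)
Your proof is correct and follows essentially the paper's argument: split Jacobi's identity \eqn{Jacobi} into residue classes modulo $p$, drop the class $r=(p-1)/2$ (its weight is divisible by $p$), merge class $r$ with class $p-1-r$ into one bilateral sum, and evaluate it with the triple product identity. The only difference is cosmetic: you first symmetrize to $\tfrac12\sum_{n\in\mathbb{Z}}$ and pair classes via $n\mapsto -n-1$, whereas the paper merges the one-sided sums over $n\ge 0$ directly; your Step 3 value $q^{r(r+1)/2}J_{p^2,p(p+2r+1)/2}$ correctly uses the triple product in the form $\sum_{n}(-1)^n z^n q^{n(n-1)/2}=(z;q)_\infty(q/z;q)_\infty(q;q)_\infty$, and the factor $(zq;q)_\infty$ in the displayed \eqn{JTP} is a misprint for $(z;q)_\infty$.
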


\begin{proof}
Using Jacobi's identity in Equation \eqn{Jacobi}, we have
\begin{align*}
E_3(q)&=\sum_{n=0}^{\infty}(-1)^n(2n+1)q^{\frac{n(n+1)}{2}}
\\
&=\sum_{n=0}^{\infty}\sum_{j=0}^{4}(-1)^{5n+j}\big(2(5n+j)+1\big)q^{\frac{(5n+j)(5n+j+1)}{2}}
\\
&\equiv \sum_{n=0}^{\infty}(-1)^n\left(q^{\frac{5n(5n+1)}{2}}+2q^{\frac{(5n+1)(5n+2)}{2}}-2q^{\frac{(5n+3)(5n+4)}{2}}-q^{\frac{(5n+4)(5n+5)}{2}}\right)\pmod 5
\\
&=\sum_{n=0}^{\infty}(-1)^n\left(q^{\frac{5n(5n+1)}{2}}-q^{\frac{(5n+4)(5n+5)}{2}}\right)
\\
&\quad+2\sum_{n=0}^{\infty}(-1)^n\left(q^{\frac{(5n+1)(5n+2)}{2}}-q^{\frac{(5n+3)(5n+4)}{2}}\right)
\\
&=\sum_{n=-\infty}^{\infty}(-1)^nq^{\frac{5n(5n+1)}{2}}+2\sum_{n=-\infty}^{\infty}(-1)^nq^{\frac{(5n+1)(5n+2)}{2}}
\\
&=(q^{10};q^{25})_{\infty}(q^{15};q^{25})_{\infty}(q^{25};q^{25})_{\infty}+2q\,(q^{20};q^{25})_{\infty}(q^{5};q^{25})_{\infty}(q^{25};q^{25})_{\infty}
\\
&=J_{25,10}+2qJ_{25,5}.
\end{align*}
where in the last but one step, we have used Jacobi's triple product identity in Equation \eqn{JTP} with $q\mapsto q^{25}$, and $z=q^{10}$ and $z=q^{5}$ for the two bilateral sums respectively.
\\\\
Next, starting from the same identity, we have
\begin{align*}
E_3(q)&=\sum_{n=0}^{\infty}(-1)^n(2n+1)q^{\frac{n(n+1)}{2}}
\\
&=\sum_{n=0}^{\infty}\sum_{j=0}^{6}(-1)^{7n+j}\big(2(7n+j)+1\big)q^{\frac{(7n+j)(7n+j+1)}{2}}
\\
&\equiv \sum_{n=0}^{\infty}(-1)^n\Big(q^{\frac{7n(7n+1)}{2}}-3q^{\frac{(7n+1)(7n+2)}{2}}+5q^{\frac{(7n+2)(7n+3)}{2}}
\\
&\quad+2q^{\frac{(7n+4)(7n+5)}{2}}+3q^{\frac{(7n+5)(7n+6)}{2}}-q^{\frac{(7n+6)(7n+7)}{2}}\Big)\pmod 7
\\
&=\sum_{n=0}^{\infty}(-1)^n\left(q^{\frac{7n(7n+1)}{2}}-q^{\frac{(7n+6)(7n+7)}{2}}\right)-3\sum_{n=0}^{\infty}(-1)^n\left(q^{\frac{(7n+1)(7n+2)}{2}}-q^{\frac{(7n+5)(7n+6)}{2}}\right)
\\
&\quad+5\sum_{n=0}^{\infty}(-1)^n\left(q^{\frac{(7n+2)(7n+3)}{2}}-q^{\frac{(7n+4)(7n+5)}{2}}\right)\pmod 7
\\
&=\sum_{n=-\infty}^{\infty}(-1)^nq^{\frac{7n(7n+1)}{2}}-3\sum_{n=-\infty}^{\infty}(-1)^nq^{\frac{(7n+1)(7n+2)}{2}}-2\sum_{n=-\infty}^{\infty}(-1)^nq^{\frac{(7n+2)(7n+3)}{2}}
\\
&=(q^{21};q^{49})_{\infty}(q^{28};q^{49})_{\infty}(q^{49};q^{49})_{\infty}-3q\,(q^{14};q^{49})_{\infty}(q^{35};q^{49})_{\infty}(q^{49};q^{49})_{\infty}
\\
&\quad-2q^3\,(q^{7};q^{49})_{\infty}(q^{42};q^{49})_{\infty}(q^{49};q^{49})_{\infty}
\\
&=J_{7,3}(q^7)-3qJ_{7,2}(q^7)-2q^3J_{7,1}(q^7),
\end{align*}
where in the last but one step, we have used Jacobi's triple product identity stated in Equation \eqn{JTP} with $q\mapsto q^{49}$, and $z=q^{28},z=q^{35}$ and $z=q^{28}$ for the three bilateral sums respectively.
\end{proof}

\begin{lemma}
\label{lem:etaprodsumdiss7}
\begin{align*}
\frac{(q;q)_{\infty}^5}{(q^2;q^2)_{\infty}^2}&\equiv\frac{\sqrt{J_{14,7}(q^7)}}{J_{14,0}(q^7)^{5/2}}\Big(J_{14,1}(q^7)J_{14,5}(q^7)J_{14,6}(q^7)+2qJ_{14,2}(q^7)J_{14,3}(q^7)J_{14,5}(q^7)
\\
&\quad+3q^5J_{14,1}(q^7)J_{14,3}(q^7)J_{14,4}(q^7)\Big) \pmod 7,
\\
\frac{(q^2;q^2)_{\infty}^5}{(q;q)_{\infty}^2}&\equiv\frac{J_{14,6}(q^7)J_{14,0}(q^7)}{J_{14,3}(q^7)}+2q\frac{J_{14,2}(q^7)J_{14,0}(q^7)}{J_{14,1}(q^7)}+3q^5\frac{J_{14,4}(q^7)J_{14,0}(q^7)}{J_{14,5}(q^7)} \pmod 7.
\end{align*}
\end{lemma}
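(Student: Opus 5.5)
Both congruences will be proved by the same method as Lemma \lem{E3diss57}: start from the series representations \eqn{etaprodsum1} and \eqn{etaprodsum2}, reduce the coefficients modulo $7$, and sort the summation index into residue classes modulo $7$. Classes with matching coefficients and exponents will pair up into bilateral sums, and each bilateral sum will then be written as a product using Jacobi's triple product identity \eqn{JTP} with base $q^{49}$ or $q^{98}$.

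\textbf{The second congruence.} Write $n=7m+j$ for $0\le j\le 6$ in \eqn{etaprodsum2}.
\begin{itemize}
\item The coefficient $(-1)^n(3n+1)$ becomes $(-1)^{m+j}(3j+1)$ modulo $7$.
\item The class $j=2$ has $3j+1=7$, so it drops out.
\item The exponent is $3(7m+j)^2+2(7m+j)=147m^2+(42j+14)m+3j^2+2j$.
\item The exponents $3j^2+2j$ for $j=0,\dots,6$ are $0,5,16,33,56,85,120$. Their residues modulo $7$ are $0,5,2,5,0,1,1$.
\end{itemize}
The classes should therefore pair as $\{0,4\}$, $\{1,3\}$ and $\{5,6\}$, with coefficients $1,-5$; $-4,10$; and $-16,19$. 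Reduced modulo $7$, these are $1,2$; $3,3$; and $5,5$.

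I will check that in each pair the two classes merge into one bilateral sum. This uses a substitution $m\mapsto -m-c$ under which the quadratic $147m^2+\dots$ is preserved, noting that $3n^2+2n$ is invariant under $n\mapsto -n-\tfrac23$, i.e.\ $n\equiv -n-\tfrac23$ in the appropriate sense modulo $7$. The relation $j\leftrightarrow j'$ should be $3j+1\equiv-(3j'+1)$, that is $j+j'\equiv 3\pmod 7$. This pairs $0\leftrightarrow3$, $1\leftrightarrow2$, $4\leftrightarrow6$, with $5$ self-paired.

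This conflicts with the residue classes of the exponents computed above, so the pairing must be worked out carefully. One possibility is that the merged sums have half-integer shifts and so correspond to theta functions in base $q^{98}=(q^7)^{14}$, which is where the $J_{14,a}(q^7)$ come from. The plan is to write each class sum via \eqn{JTP} as a $J_{14,a}(q^7)$-type product, or a quotient after using \eqn{Jrule}. Then I will match the result to the stated form $\frac{J_{14,6}J_{14,0}}{J_{14,3}}$ and its companions, using the product identity $J_{14,a}J_{14,7-a}\cdots$ and quintuple-type identities. Watson's identity \eqn{QTP} with $q\mapsto q^{49}$ and $z$ a power of $q$ is the natural tool, since \eqn{etaprodsum2} itself comes from \eqn{QTP}. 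The most efficient route will be to regroup the $7$ residue classes modulo $7$ into the three nonzero terms. Each term comes from a quintuple sum $\sum(z^{3m}-z^{-3m-1})Q^{m(3m+1)/2}$ with $Q=q^{49\cdot?}$, and each is then converted to the quotient $J_{14,2a}(q^7)J_{14,0}(q^7)/J_{14,a}(q^7)$ form.

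\textbf{The first congruence.} I will proceed the same way from \eqn{etaprodsum1}, using the coefficient $6n+1$ modulo $7$, where the class $n\equiv1$ vanishes. The product side will come out as a quintuple-product shape. To reach the stated form with the prefactor $\sqrt{J_{14,7}}/J_{14,0}^{5/2}$, I will rewrite it using $J_{14,7}(q^7)=(q^{49};q^{98})_\infty^2(q^{98};q^{98})_\infty$. An alternative is to derive the first congruence from the second: write $(q;q)^5_\infty/(q^2;q^2)^2_\infty$ in terms of the reciprocal-type product via $q\mapsto -q$ considerations together with the factorization in Theorem \thm{Weier}, and check the identity $J_{14,1}J_{14,5}J_{14,6}\sqrt{J_{14,7}}/J_{14,0}^{5/2}$ against the corresponding quintuple product.

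\textbf{Main obstacle.} I expect the hardest part to be this last identification step: converting the raw bilateral and quintuple sums into exactly the stated products. I would first try direct application of \eqn{QTP} with $q\to q^{98}$ and $z=\pm q^{k}$, simplifying with \eqn{Jrule}. I would fall back on Weierstrass' identity only if necessary.
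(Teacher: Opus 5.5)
\textbf{Verdict: genuine gap.} The gap is in the second congruence, exactly at the step you leave open, and it is partly caused by arithmetic slips.

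With $n=7m+j$ the class weights $(-1)^j(3j+1)$ are $1,-4,7,-10,13,-16,19$. So modulo $7$ the pairs $\{0,4\}$, $\{1,3\}$, $\{5,6\}$ carry $1,6$; $3,4$; $5,5$, not $1,2$; $3,3$. These relative weights are what let each pair collapse to a multiple of one product. Also, $3j+1\equiv-(3j'+1)$ gives $j+j'\equiv-2\cdot 3^{-1}\equiv 4\pmod 7$, not $3$. That is precisely the pairing you read off from the exponents (with $j=2$ self-paired and vanishing), so there is no conflict.

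The substantive problem is the mechanism in your first paragraph. In Lemma~\lem{E3diss57} the reflection $n\mapsto -n-1$ fixing $n(n+1)/2$ is integral, so two one-sided class sums glue into one bilateral Jacobi series. Here $3n^2+2n$ is fixed only by $n\mapsto -n-\tfrac23$, and $\tfrac{3n^2+n}{2}$ only by $n\mapsto -n-\tfrac13$. So no substitution $m\mapsto -m-c$ with $c\in\mathbb{Z}$ turns class $j=4$, $\sum_m(-1)^m q^{147m^2+182m+56}$, into class $j=0$, $\sum_m(-1)^m q^{147m^2+14m}$; you would need $c=\tfrac23$. A pair is not a single triple-product series, so your initial plan cannot produce the stated products. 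What you call the main obstacle is in fact the substance of the lemma.

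The paper handles it as follows. Each of the six surviving classes is by itself a triple product with base $q^{294}$. After reindexing, the three pairs become $J_{42,19}(q^7)+q^{21}J_{42,5}(q^7)$, $2q\bigl(J_{42,17}(q^7)+q^7J_{42,11}(q^7)\bigr)$ and $3q^5\bigl(J_{42,13}(q^7)-q^{28}J_{42,1}(q^7)\bigr)$. Each pair is then collapsed by Hickerson's form of the quintuple product, $\theta(qx^3;q^3)+x\,\theta(q^2x^3;q^3)=(q;q)_\infty\theta(x^2;q)/\theta(x;q)$, with $q\mapsto q^{14}$, $x=q^a$, $a=3,1,5$ (using \eqn{Jrule} when $a=5$), followed by $q\mapsto q^7$.

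Your fallback of applying \eqn{QTP} directly to each pair is a valid shortcut that bypasses the $J_{42}$ stage, with two corrections. The base has to be $q^{98}$, not $q^{49}$, and $z=-q^{7a}$, because $(q;q)_\infty\theta(x^2;q)/\theta(x;q)$ is the quintuple product at $z=-x$. You would still need to carry out the matching and track the monomial normalization.

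For the first congruence your outline agrees with the paper. The pairs are $\{0,2\}$, $\{6,3\}$, $\{5,4\}$ with weights $\pm1,\pm2,\pm3$ (class $j=1$ vanishes). Then \eqn{QTP} is applied with $q\mapsto q^{49}$ and $z=q^7,q^{14},q^{21}$, and each base-$q^{49}$ factor is split into base-$q^{98}$ factors, using $J_{14,7}(q^7)=(q^{49};q^{98})_\infty^2(q^{98};q^{98})_\infty$ to obtain the prefactor. Drop the alternative via $q\mapsto -q$: that substitution does not carry one eta quotient to the other.
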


\begin{proof}
Using the identity in Equation \eqn{etaprodsum1}, we have
\begin{align*}
\frac{(q;q)_{\infty}^5}{(q^2;q^2)_{\infty}^2}&=\sum_{n=-\infty}^{\infty}(6n+1)q^{\frac{3n^2+n}{2}}
\\
&=\sum_{n=-\infty}^{\infty}\sum_{j=0}^{6}(6(7n+j)+1)q^{\frac{3(7n+j)^2+(7n+j)}{2}}
\\
&\equiv \sum_{n=-\infty}^{\infty}\Big(q^{\frac{3(7n)^2+7n}{2}}-q^{\frac{3(7n+2)^2+(7n+2)}{2}}+2q^{\frac{3(7n+6)^2+(7n+6)}{2}}-2q^{\frac{3(7n+3)^2+(7n+3)}{2}}
\\
&\quad+3q^{\frac{3(7n+5)^2+(7n+5)}{2}}-3q^{\frac{3(7n+4)^2+(7n+4)}{2}}\Big)\pmod 7
\\
&\equiv \sum_{n=-\infty}^{\infty}\Big(q^{\frac{7(21n^2+n)}{2}}-q^{\frac{7(21n^2+13n+2)}{2}}\Big)+2q\sum_{n=-\infty}^{\infty}\Big(q^{\frac{7(21n^2+37n+16)}{2}}-q^{\frac{7(21n^2+19n+4)}{2}}\Big)
\\
&\quad+3q^5\sum_{n=-\infty}^{\infty}\Big(q^{\frac{7(21n^2+31n+10)}{2}}-q^{\frac{7(21n^2+25n+6)}{2}}\Big)
\\
&=(q^{49},q^7,q^{42};q^{49})_{\infty}(q^{63},q^{35};q^{98})_{\infty}+2q(q^{49},q^{14},q^{35};q^{49})_{\infty}(q^{77},q^{21};q^{98})_{\infty}
\\
&\quad+3q^5(q^{49},q^{21},q^{28};q^{49})_{\infty}(q^{91},q^7;q^{98})_{\infty}
\\
&=\frac{\sqrt{J_{14,7}(q^7)}}{J_{14,0}(q^7)^{5/2}}\Big(J_{14,1}(q^7)J_{14,5}(q^7)J_{14,6}(q^7)+2qJ_{14,2}(q^7)J_{14,3}(q^7)J_{14,5}(q^7)
\\
&\quad+3q^5J_{14,1}(q^7)J_{14,3}(q^7)J_{14,4}(q^7)\Big),
\end{align*}
where to obtain the last but one step, we have used quintuple product identity stated in Equation \eqn{QTP} with $q\mapsto q^{49}$, and $z=q^{7},z=q^{14}$ and $z=q^{21}$ for the three bilateral sums respectively.
\\\\
Next, using the identity in Equation \eqn{etaprodsum2}, we have
\begin{align*}
\frac{(q^2;q^2)_{\infty}^5}{(q;q)_{\infty}^2}&=\sum_{n=-\infty}^{\infty}(-1)^n(3n+1)q^{3n^2+2n}
\\
&=\sum_{n=-\infty}^{\infty}\sum_{j=0}^{6}(-1)^{7n+j}\left(3(7n+j)+1\right)q^{3(7n+j)^2+2(7n+j)}
\\
&\equiv\sum_{n=-\infty}^{\infty}(-1)^n\Big(q^{147n^2+14n}+3q^{147n^2+56n+5}+4q^{147n^2+140n+33}
\\
&\quad+6q^{147n^2+182n+56}+5q^{147n^2+224n+85}+5q^{147n^2+266n+120}\Big)\pmod 7
\\
&\equiv\sum_{n=-\infty}^{\infty}(-1)^n q^{7(21n^2+2n)}+q^{21}\sum_{n=-\infty}^{\infty}(-1)^n q^{7(21n^2-16n)}
\\
&\quad+2q\sum_{n=-\infty}^{\infty}(-1)^n q^{7(21n^2-4n)}+2q^8\sum_{n=-\infty}^{\infty}(-1)^n q^{7(21n^2-10n)}
\\
&\quad+3q^5\sum_{n=-\infty}^{\infty}(-1)^n q^{7(21n^2+8n)}-3q^{33}\sum_{n=-\infty}^{\infty}(-1)^n q^{7(21n^2-20n)}\pmod 7
\\
&=J_{42,19}(q^7)+q^{21}J_{42,5}(q^7)+2q\Big(J_{42,17}(q^7)+q^7J_{42,11}(q^7)\Big)
\\
&\quad+3q^5\Big(J_{42,13}(q^7)-q^{28}J_{42,1}(q^7)\Big)
\\
&=\frac{J_{14,6}(q^7)J_{14,0}(q^7)}{J_{14,3}(q^7)}+2q\frac{J_{14,2}(q^7)J_{14,0}(q^7)}{J_{14,1}(q^7)}+3q^5\frac{J_{14,4}(q^7)J_{14,0}(q^7)}{J_{14,5}(q^7)},
\end{align*}
where to obtain the last but one step, we have used Jacobi's triple product identity stated in Equation \eqn{JTP}. The final line follows from the identities
$$J_{42,19}(q)+q^3J_{42,5}(q)=\frac{J_{14,6}(q)J_{14,0}(q)}{J_{14,3}(q)},$$
$$J_{42,17}(q)+qJ_{42,11}(q)=\frac{J_{14,2}(q)J_{14,0}(q)}{J_{14,1}(q)},$$
and
$$J_{42,13}(q)-q^4J_{42,1}(q)=\frac{J_{14,4}(q)J_{14,0}(q)}{J_{14,5}(q)},$$
with $q\mapsto q^7$. 
\\\\
These three identities are special cases of the quintuple product identity which in one of its forms, equivalent to \eqn{QTP}, due to Hickerson \cite[Theorem 1.0]{Hi88} is
$$\theta(qx^3;q^3)+x\theta(q^2x^3;q^3)=\frac{(q;q)_\infty \theta(x^2;q)}{\theta(x;q)}.$$
Replacing $q\mapsto q^{14}$ and $x=q^a$ gives
\begin{equation}
\label{eq:J14}
\frac{J_{14,0}J_{14,2a}}{J_{14,a}}=J_{42,14+3a}+q^aJ_{42,28+3a}.
\end{equation}
Substituting $a=3$ in Equation \eqn{J14} gives
$$\frac{J_{14,0}J_{14,6}}{J_{14,3}}=J_{42,23}+q^3J_{42,37}=J_{42,19}+q^3J_{42,5}.$$
Substituting $a=1$ in Equation \eqn{J14} gives
$$\frac{J_{14,0}J_{14,2}}{J_{14,1}}=J_{42,17}+qJ_{42,31}=J_{42,17}+qJ_{42,11}.$$
Substituting $a=5$ in Equation \eqn{J14} gives
$$\frac{J_{14,0}J_{14,10}}{J_{14,5}}=J_{42,29}+q^5J_{42,43}.$$
Using \eqn{Jrule}, we have the third identity
$$\frac{J_{14,0}J_{14,4}}{J_{14,5}}=J_{42,13}-q^4J_{42,1}.$$
\end{proof}

\section{Proof of the congruences for overpartitions}

\subsection{Proof of Theorem \thm{SNOcong}}
\label{subsec:SNOcongproof}
We have
\begin{align*}
\sum_{n=0}^{\infty} \mathrm{SNO}(n) q^n&=\frac{\partial}{\partial z}\Bigg|_{z=1}(-q;q)_{\infty}\prod_{n=1}^{\infty} \frac{1}{(1 - (zq)^{n})} 
\\
&=\frac{(-q;q)_{\infty}}{(q;q)_{\infty}}\,\sum_{n=1}^{\infty}\frac{n q^{n}}{1 - q^{n}} 
\\
&=\frac{(-q;q)_{\infty}}{(q;q)_{\infty}}\,\sum_{m=1}^{\infty}\sum_{n=1}^{\infty}nq^{mn}
\\
&=\frac{(-q;q)_{\infty}}{(q;q)_{\infty}}\,\sum_{m=1}^{\infty}\frac{q^{m}}{(1-q^{m})^2}
\\
&=\frac{(q^2;q^2)_{\infty}}{(q;q)_{\infty}^2}\,B(q)
\\
&=\frac{(q^2;q^2)_{\infty}}{(q;q)_{\infty}^5}\,E_3(q)B(q).
\end{align*}
Using Equation \eqn{Fdissder} and the $5$-dissection of $E_3(q)$ in Lemma \lem{E3diss57}, we find the $5$-dissection of $\bigdot{E}_3(q)$ to get
$$\bigdot{E}_3(q)\equiv 2qJ_{25,5}.$$
Therefore using Equation \eqn{E3der} we have
$$-3E_3(q)B(q)\equiv 2qJ_{25,5}\pmod 5,$$
and hence
\begin{equation}
\label{eq:E3Bdiss5}
E_3(q)B(q)\equiv qJ_{25,5}\pmod 5.
\end{equation}
Using Euler's pentagonal number theorem in Equation \eqn{PNT} and the above congruence, we get
$$\sum_{n=0}^{\infty}\mathrm{SNO}(n)q^n\equiv\frac{\displaystyle\sum_{n=-\infty}^{\infty}(-1)^n q^{n(3n-1)}}{(q^5;q^5)_{\infty}}\,qJ_{25,5}\pmod 5.$$
Since
$$n(3n-1)+1=3n^2-n+1$$
is not of the form $5m+2$ or $5m+4$ for any $n$, we get
$$\mathrm{SNO}(5n+2)\equiv 0\pmod 5,$$
and
$$\mathrm{SNO}(5n+4)\equiv 0\pmod 5.$$
To prove the mod $7$ congruence, we use Equation \eqn{Fdissder} and the $7$-dissection of $E_3(q)$ in Lemma \lem{E3diss57} to find the $7$-dissection of $\bigdot{E}_3(q)$ and get
$$\bigdot{E}_3(q)\equiv 4qJ_{49,14}+q^3J_{49,7}\pmod 7.$$
Using Equation \eqn{E3der} we have
$$-3E_3(q)B(q)\equiv 4qJ_{49,14}+q^3J_{49,7}\pmod 7,$$
and hence
\begin{align*}
E_3(q)B(q)\equiv 8qJ_{49,14}+2q^3J_{49,7}\equiv qJ_{49,14}+2q^3J_{49,7}\pmod 7.
\end{align*}
Since 
$$\sum_{n=0}^{\infty} \mathrm{SNO}(n) q^n=\frac{(q^2;q^2)_{\infty}}{(q;q)_{\infty}^5}\,E_3(q)B(q) \equiv \frac{E(q)^2 E(q^2) E_3(q) B(q)}{(q^7;q^7)_\infty}  \pmod{7},$$ 
proving the congruence $\mathrm{SNO}(7n + 3) \equiv 0 \pmod{7}$ is equivalent to showing that
$$U_{7,3}\left( E(q)^2 E(q^2) E_3(q) B(q) \right) 
\equiv 0 \pmod{7},$$
where the $U_{p,a}$ operator is defined in Equation \eqn{Updef}. Substituting for the 7-dissection of $E(q)$ from Equation \eqn{Ediss7} and for $E_3(q) B(q)$ from the congruence above, this in turn is equivalent to showing that
\begin{align*}
C(q) &= 2 q^{2} J_{7,1}^{5} J_{7,2} J_{14,2} J_{14,4}+4 q J_{7,1}^{4} J_{7,2} J_{7,3} J_{14,4}^{2}+2 q J_{7,1}^{3} J_{7,2}^{2} J_{7,3} J_{14,2} J_{14,6}
\\
&+4 q J_{7,1}^{3} J_{7,3}^{3} J_{14,2} J_{14,4}+2 q J_{7,1}^{2} J_{7,2}^{3} J_{7,3} J_{14,2} J_{14,4}-J_{7,1}^{2} J_{7,3}^{4} J_{14,4}^{2}+J_{7,2}^{4} J_{7,3}^{2} J_{14,2} J_{14,6}
\\
&\equiv 0 \pmod{7}.
\end{align*}
We prove this using the two theta function identities
\begin{align*}
C_1(q) &:= q J_{7,1}^{3} J_{7,2}+J_{7,1} J_{7,3}^{3}-J_{7,2}^{3} J_{7,3} = 0,~\mbox{and}
\\
C_2(q) &:= q J_{7,1}^{2} J_{14,2} J_{14,4}+J_{7,1} J_{7,3} J_{14,4}^{2}-J_{7,2} J_{7,3} J_{14,2} J_{14,6} =0.
\end{align*}
The first identity follows from substituting $a=q,b=q,c=q^3$, and $d=q^2$ and taking the base $q^{14}$ in the function $\theta(z;q)$ in Weierstrass's identity in Theorem \thm{Weier}. The second identity $C_2(q)=0$ on splitting modulus $7$ to modulus $14$ using 
$$J_{7,1}=J_{14,1}J_{14,6},\quad J_{7,2}=J_{14,2}J_{14,5},\quad J_{7,3}=J_{14,3}J_{14,4},$$
equivalently reduces to showing 
$$qJ_{14,1}^{2}J_{14,2}J_{14,6}+J_{14,1}J_{14,3}J_{14,4}^{2}-J_{14,2}^{2}J_{14,3}J_{14,5}=0.$$
This once again follows from substituting $a=q,b=q,c=q^3$, and $d=q^2$ and taking the base $q^{14}$ in the function $\theta(z;q)$ in the Weierstrass's identity.
\\\\
The result 
$$C(q)  \equiv 0 \pmod{7}$$
follows from the the fact that
$$C(q) - \left( Q_1(q)\, C_1(q) + Q_2(q)\,C_2(q)\right)= -7 J_{7,1}^{2} J_{7,3}^{4} J_{14,4}^{2}+7 J_{7,2}^{4} J_{7,3}^{2} J_{14,2} J_{14,6},$$
where
\begin{align*}
Q_1(q) &= 2 J_{7,1} J_{7,3} J_{14,4}^{2}+4 J_{7,2} J_{7,3} J_{14,2} J_{14,6},~\mbox{and}
\\
Q_2(q) &= 2 q J_{7,1}^{3} J_{7,2}+4 J_{7,1} J_{7,3}^{3}+2 J_{7,2}^{3} J_{7,3}.
\end{align*}  
This proves the congruence $$\mathrm{SNO}(7n + 3) \equiv 0 \pmod{7}.$$

\subsection{Proof of Theorem \thm{SENOcong}}
We have
\begin{align*}
\sum_{n=0}^{\infty} \mathrm{SENO}(n) q^n&=\frac{\partial}{\partial z}\Bigg|_{z=1}\frac{(-q;q)_{\infty}}{(q;q^2)_{\infty}}\prod_{n=1}^{\infty} \frac{1}{(1 - (zq)^{2n})} 
\\
&=\frac{(-q;q)_{\infty}}{(q;q^2)_{\infty}}\,\frac{1}{(q^2;q^2)_{\infty}}\sum_{n=1}^{\infty} \frac{2n q^{2n}}{1 - q^{2n}} 
\\
&=\frac{(-q;q)_{\infty}}{(q;q)_{\infty}}\,\sum_{m=1}^{\infty}\frac{2q^{2m}}{(1-q^{2m})^2}
\\
&=\frac{(q^2;q^2)_{\infty}}{(q;q)_{\infty}^2}\,2B(q^2)
\\
&=2\frac{(q;q)_{\infty}^5}{(q^2;q^2)_{\infty}^2(q;q)_{\infty}^7}\,E_3(q^2)B(q^2).
\end{align*}
In the proof of the mod $7$ congruence in the previous theorem, we had obtained
\begin{align*}
E_3(q)B(q)\equiv qJ_{49,14}+2q^3J_{49,7}\pmod 7
\end{align*}
which implies
\begin{align*}
E_3(q^2)B(q^2)\equiv q^2J_{98,28}+2q^6J_{98,14}\pmod 7.
\end{align*}
Using the identity in Equation \eqn{etaprodsum1} and the above congruence, we get
$$\sum_{n=0}^{\infty}\mathrm{SENO}(n)q^n\equiv\left(2\sum_{n=-\infty}^{\infty}(6n+1)q^{\frac{3n^2+n}{2}}\right)\left(q^2J_{98,28}+2q^6J_{98,14}\right)\pmod 7.$$
Since
$$\frac{3n^2+n}{2}+2\quad\text{and}\quad\frac{3n^2+n}{2}+6$$
are not of the form $7m+5$ for any $n$, we get
$$\mathrm{SENO}(7n+5)\equiv 0\pmod 7.$$

\subsection{Proof of Theorem \thm{SONOcong}}
We have
\begin{align*}
\sum_{n=0}^{\infty} \mathrm{SONO}(n) q^n&=\sum_{n=0}^{\infty} \mathrm{SNO}(n) q^n-\sum_{n=0}^{\infty} \mathrm{SENO}(n) q^n
\\
&=\frac{(-q;q)_{\infty}}{(q;q)_{\infty}}\,\left(\sum_{m=1}^{\infty}\frac{q^{m}}{(1-q^{m})^2}-\sum_{m=1}^{\infty}\frac{2q^{2m}}{(1-q^{2m})^2}\right)
\\
&=\frac{(q^2;q^2)_{\infty}}{(q;q)_{\infty}^2}\left(B(q)-2B(q^2)\right)
\\
&=\frac{(q^2;q^2)_{\infty}}{(q;q)_{\infty}^5}E_3(q)B(q)-2\frac{E_3(q)}{(q;q)_{\infty}^5}E(q^2)B(q^2).
\end{align*}
Then using Equations \eqn{Ediss5} and \eqn{E3Bdiss5}, we have
\begin{align*}
\frac{(q^2;q^2)_{\infty}}{(q;q)_{\infty}^5}E_3(q)B(q)&\equiv \frac{\frac{J_{50,0}J_{50,20}}{J_{50,10}}-q^2J_{50,0}-q^4\frac{J_{50,0}J_{50,10}}{J_{50,20}}}{(q^5;q^5)_{\infty}}qJ_{25,5} \pmod 5
\\
&=\frac{q\frac{J_{25,5}J_{50,0}J_{50,20}}{J_{50,10}}-q^3J_{25,5}J_{50,0}-q^5\frac{J_{25,5}J_{50,0}J_{50,10}}{J_{50,20}}}{(q^5;q^5)_{\infty}}.
\end{align*} 
We treat the other term next. Using Equation \eqn{Fdissder} and the $5$-dissection of $E(q)$ in Equation \eqn{Ediss5}, we have
\begin{align*}
\bigdot{E}(q)&\equiv -qJ_{25,0}-2q^2\frac{J_{25,0}J_{25,5}}{J_{25,10}} \pmod 5.
\end{align*}
Therefore using Equation \eqn{Eder} we have
$$E(q)B(q)\equiv qJ_{25,0}+2q^2\frac{J_{25,0}J_{25,5}}{J_{25,10}} \pmod 5.$$
Using the $5$-dissection of $E_3(q)$ in Lemma \lem{E3diss57} and the above congruence, we get
\begin{align*}
2\frac{E_3(q)}{(q;q)_{\infty}^5}E(q^2)B(q^2) &\equiv 2\frac{J_{25,10}+2qJ_{25,5}}{(q^5;q^5)_{\infty}}(q^2J_{50,0}+2q^4\frac{J_{50,0}J_{50,10}}{J_{50,20}}) \pmod 5
\\
&=\frac{2q^2J_{25,10}J_{50,0}+4q^3J_{25,5}J_{50,0}+4q^4\frac{J_{25,10}J_{50,0}J_{50,10}}{J_{50,20}}-2q^5\frac{J_{25,5}J_{50,0}J_{50,10}}{J_{50,20}}}{(q^5;q^5)_{\infty}}.
\end{align*}
Thus in the generating function of $\mathrm{SONO}(n)$ that we had obtained as a difference of the two terms that we have treated, it is easy that the two terms of form $q^{5n+3}$ cancel out modulo $5$. This gives the desired congruence
$$\mathrm{SONO}(5n + 3) \equiv 0 \pmod{5}.$$

\section{Proof of the congruences for partitions without repeated odd parts}

\subsection{Proof of Theorem \thm{SEDOcong}}
\label{subsec:SEDOcongproof}
We have
\begin{align*}
\sum_{n=0}^{\infty} \mathrm{SEDO}(n) q^n&=\frac{\partial}{\partial z}\Bigg|_{z=1}(-q;q^2)_{\infty}\prod_{n=1}^{\infty} \frac{1}{(1 - (zq)^{2n})} 
\\
&=\frac{(-q;q^2)_{\infty}}{(q^2;q^2)_{\infty}}\,\sum_{n=1}^{\infty}\frac{2n q^{2n}}{1 - q^{2n}} 
\\
&=2\frac{(-q;q^2)_{\infty}}{(q^2;q^2)_{\infty}}\,\sum_{m=1}^{\infty}\frac{q^{2m}}{(1-q^{2m})^2}
\\
&=2\frac{(-q;q^2)_{\infty}}{(q^2;q^2)_{\infty}^5}\,(q^2;q^2)_{\infty}^4\,B(q^2)
\\
&=2\frac{(-q;q^2)_{\infty}(q^2;q^2)_{\infty}}{(q^2;q^2)_{\infty}^5}\,E_3(q^2)B(q^2).
\end{align*}
For simplification, we let $q\mapsto -q$ and consider the function $$2\frac{(q;q^2)_{\infty}(q^2;q^2)_{\infty}}{(q^2;q^2)_{\infty}^5}\,E_3(q^2)B(q^2)=2\frac{(q;q)_{\infty}}{(q^2;q^2)_{\infty}^5}\,E_3(q^2)B(q^2).$$
Previously in Equation \eqn{E3Bdiss5} we had found
$$E_3(q)B(q)\equiv qJ_{25,5}\pmod 5.$$
Then using Equation \eqn{PNT} and the above congruence, we get
$$\sum_{n=0}^{\infty}\mathrm{SEDO}(n)q^n\equiv\frac{2\displaystyle\sum_{n=-\infty}^{\infty}(-1)^n q^{\frac{n(3n-1)}{2}}}{(q^{10};q^{10})_{\infty}}\,q^2J_{50,10}\pmod 5.$$
Since
$$\frac{n(3n-1)}{2}+2$$
is not of the form $5m$ for any $n$, we get
$$\mathrm{SEDO}(5n)\equiv 0\pmod 5.$$
To prove the modulo $7$ congruence, we appeal to Equation \eqn{DOder}  which states
\begin{equation}
\label{eq:DOeq}
\bigdot{\mathrm{DO}}(q)=-\mathrm{DO}(q)B(q)+4\mathrm{DO}(q)B(q^2).
\end{equation}
We just found that the generating function of $\mathrm{SEDO}(n)$ is 
$$2\frac{(-q;q^2)_{\infty}}{(q^2;q^2)_{\infty}}\,B(q^2).$$
For simplification, we let $q\mapsto -q$ and consider the function 
$$2\frac{(q;q^2)_{\infty}}{(q^2;q^2)_{\infty}}\,B(q^2)$$
which is $2\mathrm{DO}(q)B(q^2)$. We then prove the desired congruence modulo $7$ by treating the other two terms in Equation \eqn{DOeq}.
\\\\
We have 
$$\mathrm{DO}(q)B(q)=\frac{(q;q)_{\infty}^5}{(q^2;q^2)_{\infty}^2(q;q)_{\infty}^7}\,E_3(q)B(q).$$
In the proof of the mod $7$ congruence in Theorem \thm{SNOcong} in Section \subsect{SNOcongproof}, we had obtained
\begin{align*}
E_3(q)B(q)\equiv qJ_{7,2}(q^7)+2q^3J_{7,1}(q^7)\pmod 7.
\end{align*}
Then using the first congruence in Lemma \lem{etaprodsumdiss7} and the above congruence we get
\begin{align*}
\mathrm{DO}(q)B(q)&\equiv
\frac{\sqrt{J_{14,7}(q^7)}}{J_{14,0}(q^7)^{5/2}J_{1,0}(q^7)}\Big(qJ_{7,2}(q^7)J_{14,1}(q^7)J_{14,5}(q^7)J_{14,6}(q^7)
\\
&\quad+2q^2J_{7,2}(q^7)J_{14,2}(q^7)J_{14,3}(q^7)J_{14,5}(q^7)
\\
&\quad+2q^3J_{7,1}(q^7)J_{14,1}(q^7)J_{14,5}(q^7)J_{14,6}(q^7)
\\
&\quad+4q^4J_{7,1}(q^7)J_{14,2}(q^7)J_{14,3}(q^7)J_{14,5}(q^7)
\\
&\quad+3q^6J_{7,2}(q^7)J_{14,1}(q^7)J_{14,3}(q^7)J_{14,4}(q^7)
\\
&\quad-q^8J_{7,1}(q^7)J_{14,1}(q^7)J_{14,3}(q^7)J_{14,4}(q^7)\Big)\pmod 7.
\end{align*}
Thus, $\mathrm{DO}(q)B(q)$ does not have terms of the form $q^{7n}$ or $q^{7n+5}$.
\\\\
Next we have
\begin{equation}
\label{eq:DOmult}
\mathrm{DO}(q)=\frac{(q^2;q^2)_{\infty}^5}{(q;q)_{\infty}^2(q^2;q^2)_{\infty}^7}\,E_3(q).
\end{equation}
Multiplying the $7$-dissections of $\frac{(q^2;q^2)_{\infty}^5}{(q;q)_{\infty}^2}$ and $E_3(q)$ in Lemmas \lem{etaprodsumdiss7} and \lem{E3diss57} respectively we get
\begin{align*}
\mathrm{DO}(q)&\equiv\frac{1}{J_{2,0}(q^7)}\Bigg(\frac{J_{7,3}(q^7)J_{14,6}(q^7)J_{14,0}(q^7)}{J_{14,3}(q^7)}+2q\frac{J_{7,3}(q^7)J_{14,2}(q^7)J_{14,0}(q^7)}{J_{14,1}(q^7)}
\\
&\quad+3q^5\frac{J_{7,3}(q^7)J_{14,4}(q^7)J_{14,0}(q^7)}{J_{14,5}(q^7)}-3q\frac{J_{7,2}(q^7)J_{14,6}(q^7)J_{14,0}(q^7)}{J_{14,3}(q^7)}
\\
&\quad+q^2\frac{J_{7,2}(q^7)J_{14,2}(q^7)J_{14,0}(q^7)}{J_{14,1}(q^7)}-2q^6\frac{J_{7,2}(q^7)J_{14,4}(q^7)J_{14,0}(q^7)}{J_{14,5}(q^7)}
\\
&\quad-2q^3\frac{J_{7,1}(q^7)J_{14,6}(q^7)J_{14,0}(q^7)}{J_{14,3}(q^7)}-4q^4\frac{J_{7,1}(q^7)J_{14,2}(q^7)J_{14,0}(q^7)}{J_{14,1}(q^7)}
\\
&\quad+q^8\frac{J_{7,1}(q^7)J_{14,4}(q^7)J_{14,0}(q^7)}{J_{14,5}(q^7)}\Bigg)\pmod 7.
\end{align*}
Then using Equation \eqn{Fdissder}, we have
\begin{align*}
\bigdot{\mathrm{DO}}(q)&\equiv \frac{1}{J_{2,0}(q^7)}\Bigg(2q\frac{J_{7,3}(q^7)J_{14,2}(q^7)J_{14,0}(q^7)}{J_{14,1}(q^7)}+q^5\frac{J_{7,3}(q^7)J_{14,4}(q^7)J_{14,0}(q^7)}{J_{14,5}(q^7)}
\\
&\quad-3q\frac{J_{7,2}(q^7)J_{14,6}(q^7)J_{14,0}(q^7)}{J_{14,3}(q^7)}+2q^2\frac{J_{7,2}(q^7)J_{14,2}(q^7)J_{14,0}(q^7)}{J_{14,1}(q^7)}
\\
&\quad+2q^6\frac{J_{7,2}(q^7)J_{14,4}(q^7)J_{14,0}(q^7)}{J_{14,5}(q^7)}+q^3\frac{J_{7,1}(q^7)J_{14,6}(q^7)J_{14,0}(q^7)}{J_{14,3}(q^7)}
\\
&\quad-2q^4\frac{J_{7,1}(q^7)J_{14,2}(q^7)J_{14,0}(q^7)}{J_{14,1}(q^7)}+q^8\frac{J_{7,1}(q^7)J_{14,4}(q^7)J_{14,0}(q^7)}{J_{14,5}(q^7)}\Bigg)\pmod 7.
\end{align*}
Thus, $\bigdot{\mathrm{DO}}(q)$ as well does not have terms of the form $q^{7n}$. Equation \eqn{DOeq} then implies the congruence $$\mathrm{SEDO}(7n)\equiv 0\pmod 7.$$

\subsection{Proof of Theorem \thm{SODOcong}}
$$\mathrm{SODO}(5n)\equiv 0\pmod 5$$
follows from 
$$\mathrm{SEDO}(5n)\equiv 0\pmod 5$$ 
because $\mathrm{SODO}(n)+\mathrm{SEDO}(n)$ gives $np^*(n)$ where $p^*(n)$ is the number of partitions of $n$ without repeated odd parts. The congruence $$\mathrm{SODO}(7n)\equiv 0\pmod 7$$ follows by similar reasoning.
\\\\
Next we have 
\begin{align*}
\sum_{n=0}^\infty(\mathrm{SODO}(n)+\mathrm{SEDO}(n))q^n&=\sum_{n=0}^\infty np^*(n)q^n=q\frac{d}{dq}\sum_{n=0}^\infty p^*(n) q^n=q\frac{d}{dq}\frac{(-q;q^2)_{\infty}}{(q^2;q^2)_{\infty}}.
\end{align*}
Then
\begin{align*}
\sum_{n\geq 0}\left(\mathrm{SEDO}(n)-\mathrm{SODO}(n)\right)q^n&=2\sum_{n\geq 0}\mathrm{SEDO}(n)q^n-\sum_{n=0}^\infty(\mathrm{SODO}(n)+\mathrm{SEDO}(n))q^n
\\
&=4\frac{(-q;q^2)_\infty}{(q^2;q^2)_\infty}B(q^2)-q\frac{d}{dq}\frac{(-q;q^2)_\infty}{(q^2;q^2)_\infty} 
\\
&=\frac{(-q;q^2)_\infty}{(q^2;q^2)_\infty}B(-q),
\end{align*}
where to obtain the last step we have used Equation \eqn{DOder} with $q\mapsto -q$. Then,
$$2\sum_{n\geq 0}\mathrm{SODO}(n)q^n=q\frac{d}{dq}\frac{(-q;q^2)_{\infty}}{(q^2;q^2)_{\infty}}-\frac{(-q;q^2)_\infty}{(q^2;q^2)_\infty}B(-q).$$
Replacing $q\mapsto -q$ again, we get
$$2\sum_{n\geq 0}(-1)^n\mathrm{SODO}(n)q^n=\bigdot{\mathrm{DO}}(q)-\mathrm{DO}(q)B(q).$$
From the proof of the previous theorem, we find that 
\begin{align*}
U_{7,2}(\bigdot{\mathrm{DO}}(q)-\mathrm{DO}(q)B(q))&\equiv 2q^2\frac{J_{7,2}(q^7)J_{14,2}(q^7)J_{14,0}(q^7)}{J_{14,1}(q^7)J_{2,0}(q^7)}
\\
&\quad-\frac{\sqrt{J_{14,7}(q^7)}}{J_{14,0}(q^7)^{5/2}J_{1,0}(q^7)}2q^2J_{7,2}(q^7)J_{14,2}(q^7)J_{14,3}(q^7)J_{14,5}(q^7) 
\\
&\quad\pmod 7
\\
&=0,
\end{align*}
where to obtain the last equality we have used 
$$J_{2,0}J_{14,0}^2=J_{14,2}J_{14,4}J_{14,6}~~~~\text{and}~~~~
J_{1,0}J_{14,0}^{11/2}=\sqrt{J_{14,7}}\,J_{14,1}J_{14,2}J_{14,3}J_{14,4}J_{14,5}J_{14,6}.$$
This implies the congruence 
$$\mathrm{SODO}(7n+2) \equiv 0 \pmod{7}.$$

\subsection{Proof of Theorem \thm{SEDO-SODOcong}}

In the proof of the previous theorem, we had obtained
\begin{align*}
\sum_{n\geq 0}\left(\mathrm{SEDO}(n)-\mathrm{SODO}(n)\right)q^n=-\frac{(-q;q^2)_\infty}{(q^2;q^2)_\infty}B(-q),
\end{align*}
Then
$$\sum_{n\geq 0}(-1)^n\left(\mathrm{SEDO}(n)-\mathrm{SODO}(n)\right)q^n=-\frac{(q;q^2)_\infty}{(q^2;q^2)_\infty}B(q)=\mathrm{DO}(q)B(q).$$
In the proof of Theorem \thm{SEDOcong} in Section \subsect{SEDOcongproof}, we had found that $\mathrm{DO}(q)B(q)$ does not have terms of the form $q^{7n+5}$. This gives us the desired congruence 
$$\mathrm{SEDO}(7n+5)-\mathrm{SODO}(7n+5) \equiv 0 \pmod{7}.$$


\begin{thebibliography}{00}

\bibitem{An-Be12}
G.~E. Andrews and B.~C. Berndt, {\it Ramanujan's lost notebook. Part III}, Springer, New York, 2012.

\bibitem{An-Da26}
G.~E. Andrews and M.~G. Dastidar, $p(5n+4)$ again, Ramanujan J. {\bf 69} (2026), no.~1, Paper No. 26, 9 pp.

\bibitem{Br-Th22}
J.~G. Bradley-Thrush, Properties of the Appell-Lerch function (I), Ramanujan J. {\bf 57} (2022), no.~1, 291--367.

\bibitem{Ga88}
F.~G. Garvan, New combinatorial interpretations of Ramanujan's partition congruences mod $5,7$ and $11$, Trans. Amer. Math. Soc. {\bf 305} (1988), no.~1, 47--77.

\bibitem{Hi88}
D.~R. Hickerson, A proof of the mock theta conjectures, Invent. Math. {\bf 94} (1988), no.~3, 639--660.

\bibitem{On-Ro95}
K. Ono and S. Robins, Superlacunary cusp forms, Proc. Amer. Math. Soc. {\bf 123} (1995), no.~4, 1021--1029.

\end{thebibliography}
\end{document}